\theoremstyle{plain}
\newtheorem{theorem}{Theorem}[section]
\newtheorem{lemma}[theorem]{Lemma}
\newtheorem{corollary}[theorem]{Corollary}
\newtheorem{assumption}[theorem]{Assumption}
\newtheorem*{assumptionGRA}{General Regularity Assumptions}
\newtheorem*{theoremQuad}{Quadratic Convergence Theorem}
\theoremstyle{definition}
\newtheorem{definition}[theorem]{Definition}
\newtheorem{example}[theorem]{Example}
\theoremstyle{remark}
\def\algname{\textup{\texttt{quADAPT}}}
\newcommand{\Punkt}{\ .}
\newcommand{\Komma}{\ ,}
\newcommand{\NNumbers}{\mathbb{N}}
\newcommand{\RNumbers}{\mathbb{R}}
\newcommand{\unitball}[2]{B_{#1}(#2)}
\newcommand{\unitballClosed}[2]{\overline{B}_{#1}(#2)}
\newcommand{\norm}[1]{\|#1\|}
\newcommand{\normbig}[1]{\big\|#1\big\|}
\newcommand{\ConstantL}{\ensuremath{L}}
\newcommand{\Constant}{K}
\newcommand{\ConstantOne}{K_1}
\newcommand{\ConstantTwo}{K_2}
\newcommand{\ConstantFour}{k_4}
\newcommand{\ConstantK}{K}
\newcommand{\ConstantKTwo}{K_2}
\newcommand{\ConstantKThree}{K_3}
\newcommand{\ConstantKFour}{K_4}
\newcommand{\ConstantKFife}{K_5}
\newcommand{\DimOptVar}{\ensuremath n}
\newcommand{\OptSpace}{\ensuremath{ \RNumbers^{\DimOptVar}}}
\newcommand{\FeasSet}{\ensuremath{M}}
\newcommand{\ObjFun}{\ensuremath{f}}
\newcommand{\NonLinSpace}{\OptSpace}
\newcommand{\NonLinIneqIndex}{\ensuremath{i}}
\newcommand{\NonLinIneqSingle}{\ensuremath{g_{\NonLinIneqIndex}}}
		\newcommand{\LICQ}{\textsf{\normalfont LICQ}}
		\newcommand{\MFCQ}{\textsf{\normalfont MFCQ}}
		\newcommand{\SOSCCone}[1]{T(#1)}
		\newcommand{\SOSCDir}{\ensuremath{\boldsymbol{d}}}
		\newcommand{\StabOpenSubset}{U}
		\newcommand{\PermObjFun}{\tilde{\ObjFun}}
		\newcommand{\PermSet}[2]{\mathcal{F}_{#1}(#2)}
\newcommand{\SIP}{\textup{\textsf{SIP}}}
\newcommand{\SIPEx}{\ensuremath{\SIP_{\text{ex}}}}
\newcommand{\Indexset}{Y}  
\newcommand{\UpLevVars}{\boldsymbol{x}}
\newcommand{\UpLevVarsSingle}{x}
\newcommand{\LowLevVars}{\boldsymbol{y}}
\newcommand{\LowLevVarsSingle}{y}
\newcommand{\UpLevDim}{n}
\newcommand{\LowLevDim}{m}
\newcommand{\UpLevSpace}{\RNumbers^{\UpLevDim}}
\newcommand{\LowLevSpace}{\RNumbers^{\LowLevDim}}
\newcommand{\UpLevIneqIndexset}{I}
\newcommand{\UpLevIneqIndex}{i}
\newcommand{\UpLevIneqIndexForAll}{\UpLevIneqIndex\in \UpLevIneqIndexset}
\newcommand{\UpLevIneq}{\boldsymbol{g}}
\newcommand{\UpLevIneqSingle}{g_{\UpLevIneqIndex}}
\newcommand{\UpLevIneqSpace}{\RNumbers^{|\UpLevIneqIndexset|}}
\newcommand{\LowLevIneqIndexset}{J}
\newcommand{\LowLevIneqIndex}{j}
\newcommand{\LowLevIneqIndexForAll}{\LowLevIneqIndex\in \LowLevIneqIndexset}
\newcommand{\LowLevIneq}{\boldsymbol{v}}
\newcommand{\LowLevIneqSingle}{v_{\LowLevIneqIndex}}
\newcommand{\LowLevIneqSpace}{\RNumbers^{|\LowLevIneqIndexset|}}
\newcommand{\LowLevProblem}[2]{Q_{#1}(#2)}
\newcommand{\LowLevLag}{\mathcal{L}}
\newcommand{\LowLevIneqLag}{\boldsymbol{\mu}^{\UpLevIneqIndex}}
\newcommand{\ActiveSet}[1]{\Indexset_0^{\UpLevIneqIndex}(#1)}
\newcommand{\FeasIt}{\bar{\UpLevVars}^{\ItIndex}}
	\newcommand{\LowLevMultIndex}{l}
	\newcommand{\LowLevMultNum}{q_i}
	\newcommand{\LowLevMult}[1][\LowLevMultIndex]{\LowLevVars^{\UpLevIneqIndex,#1}}
	\newcommand{\LowLevMultIneqLag}{\boldsymbol{\mu}^{\UpLevIneqIndex,\LowLevMultIndex}}
	\newcommand{\GSIP}{\textup{\textsf{GSIP}}}
	\newcommand{\TraffoFun}{\boldsymbol{t}}
	\newcommand{\UpLevIneqLag}{\boldsymbol{\lambda}}
	\newcommand{\UpLevIneqLagSingle}{\lambda}
	\newcommand{\NonLinIneqLagSingle}{\ensuremath{\lambda_{\NonLinIneqIndex}}}
	\newcommand{\ELICQ}{\textup{ELICQ}}
	\newcommand{\EMFCQ}{\textup{EMFCQ}}
	\newcommand{\EMFCQVec}{\boldsymbol{\xi}}
	\newcommand{\PermUpLevIneq}{\tilde{\UpLevIneq}}
	\newcommand{\PermUpLevIneqSingle}{\tilde{g}_{\UpLevIneqIndex}}
\newcommand{\DiscSIP}[1]{\SIP(#1)}
\newcommand{\ItIndex}{k}
\newcommand{\ItIndexset}[1][\ItIndex]{\Indexset^{#1}}
\newcommand{\ItLowLev}[1][\ItIndex]{\LowLevVars^{\UpLevIneqIndex,#1}}
\newcommand{\ItUpLev}[1][\ItIndex]{\UpLevVars^{#1}}
\newcommand{\LimUpLev}{\UpLevVars^*}
\newcommand{\LimLowLev}{\LowLevVars^{\NonLinIneqIndex,*}}
\newcommand{\ItLowLevIneqLag}[1][\ItIndex]{\boldsymbol{\mu}^{\UpLevIneqIndex,#1}}
\newcommand{\ItIndexsetActive}[1][\ItIndex]{\Indexset^{\UpLevIneqIndex,#1}_0}
\newcommand{\ItSubIndex}{l}
\newcommand{\ItSubedIndex}{\ItIndex_{\ItSubIndex}}
\newcommand{\SeqUpLev}{\{\ItUpLev\}_{\ItIndex \in \NNumbers}}
\newcommand{\LinSIP}[2][\ItIndex]{\overline{\textup{\textsf{SIP}}}^{#1}(#2)}
\newcommand{\ItLinConstr}[2][\ItIndex]{\overline{g}_{\UpLevIneqIndex}^{#1}(#2)}
\newcommand{\ItLinLowLev}[1][\ItIndex]{\overline{\LowLevVars}^{\UpLevIneqIndex,#1}}
\newcommand{\ItLinLowLevIneqLag}[1][\ItIndex]{\overline{\boldsymbol{\mu}}^{\UpLevIneqIndex,#1}}
\newcommand{\DiffIndices}[1][\ItIndex]{\overline{\UpLevIneqIndexset}^{#1}}
\newcommand{\LimUpLevIneqLagSingle}{\UpLevIneqLagSingle^{*}}
\newcommand{\ItUpLevIneqLag}[1][\ItIndex]{\UpLevIneqLag^{#1}}
\newcommand{\ItUpLevIneqLagSingle}[1][\ItIndex]{\UpLevIneqLagSingle^{#1}}
\newcommand{\ItLinConstrLag}[1][\ItIndex]{\overline{\UpLevIneqLag}^{#1}}
\newcommand{\ItLinConstrLagSingle}[1][\ItIndex]{\overline{\UpLevIneqLagSingle}^{#1}_{\UpLevIneqIndex}}
\newcommand{\ItConstrLagSingle}[1][\ItIndex]{\hat{\UpLevIneqLagSingle}^{#1}}
\title{An adaptive discretization method solving semi-infinite optimization problems with quadratic rate of convergence}
\author{Tobias Seidel\footnote{
		Corresponding author: Tobias Seidel. Email: tobias.seidel@itwm.fraunhofer.de}, Karl-Heinz K\"{u}fer
	\\
	Fraunhofer Institute for Industrial Mathematics (ITWM), \\
	Kaiserslautern, Germany \\
}
\begin{document}


%
%

\maketitle

\begin{abstract}
Semi-infinite programming can be used to model a large variety of complex optimization problems. The simple description of such problems comes at a price: semi-infinite problems are often harder to solve than finite nonlinear problems. In this paper we combine a classical adaptive discretization method developed by Blankenship and Falk in \cite{Blankenship.1976} and techniques regarding a semi-infinite optimization problem as a bi-level optimization problem. We develop a new adaptive discretization method which combines the advantages of both techniques and exhibits a quadratic rate of convergence. We further show that a limit of the iterates is a stationary point, if the iterates are stationary points of the approximate problems.
\end{abstract}

\paragraph{Keywords:} Semi-infinite programming; Discretization methods; Bi-level optimization; Stationary points; Quadratic convergence

\section{Introduction}
In this paper we consider semi-infinite optimization problems. Therefore, Let $\UpLevIneqIndexset, \LowLevIneqIndexset$ be finite index sets and
\begin{align*}
\ObjFun:&\ \UpLevSpace\rightarrow\RNumbers \Komma\\
\UpLevIneq:&\ \UpLevSpace\times\LowLevSpace \rightarrow \UpLevIneqSpace \Komma\\
\LowLevIneq:&\ \LowLevSpace\rightarrow\LowLevIneqSpace
\end{align*}
be twice continuously differentiable functions. We consider throughout this work the following optimization problem:
\begin{align}
\nonumber\SIP : \min_{\UpLevVars\in \UpLevSpace}\quad& \ObjFun(\UpLevVars)\\
\text{s.t.}\quad & \NonLinIneqSingle(\UpLevVars,\LowLevVars) \leq 0 \text{ for all } \UpLevIneqIndexForAll, \LowLevVars\in \Indexset \Komma\label{eq:SemiConstrs}
\end{align}
where
\begin{equation*}
\Indexset=\left\{\LowLevVars \in \LowLevSpace \mid \LowLevIneq(\LowLevVars) \leq 0 
\right\} 
\end{equation*}
denotes the so-called \emph{semi-infinite indexset}. The inequalities in \eqref{eq:SemiConstrs} are called \emph{semi-infinite constraints}. We denote the feasible set of problem \SIP\ by $\FeasSet$. An overview over the theory and numerical methods for semi-infinite optimization can be found in the overview articles \cite{Hettich.1993,Lopez.2007} and the books \cite{Hettich.1982,Polak.1997}.

For every $\UpLevIneqIndexForAll$ and $\UpLevVars\in \UpLevSpace$ the $\UpLevIneqIndex$-th lower-level problem is denoted by
\begin{align*}
\LowLevProblem{\UpLevIneqIndex}{\UpLevVars}: \max_{\LowLevVars\in \LowLevSpace} \quad &\UpLevIneqSingle(\UpLevVars,\LowLevVars)\\
s.t.\quad& \LowLevIneqSingle(\LowLevVars)\leq 0 \text{ for all }\nonumber \LowLevIneqIndexForAll \Punkt
\end{align*}
By introducing, for every $\UpLevIneqIndexForAll$, the so-called optimal value function $\varphi_{\UpLevIneqIndex}(\UpLevVars)= \max_{\LowLevVars\in \Indexset} \UpLevIneqSingle(\UpLevVars,\LowLevVars)$, the feasible set can be described equivalently by
\begin{equation*}
\FeasSet= \{\UpLevVars \in \UpLevSpace\mid \varphi_{\UpLevIneqIndex}(\UpLevVars)\leq 0 \text{ for all } \UpLevIneqIndexForAll\} \Punkt
\end{equation*}
This bi-level structure is important in the investigation of the structural properties and the development of algorithms \cite{Stein.2003}. Using reformulations of convex lower-level problems, algorithms have been developed. In \cite{Stein.2003b} and \cite{Stein.2010} the authors used the KKT conditions  to replace the lower-level problem. To avoid complementarity conditions the authors in \cite{Diehl.2013} used the lower-level Wolfe duality.

In this paper we discuss the solution of semi-infinite problem with the help of an adaptive discretization methods. To do so we will not solve the original problem, but a series of finite nonlinear optimization problems which approximate the semi-infinite problem.

We consider a finite subset $\dot{Y} \subseteq \Indexset$. The problem
\begin{align*}
\SIP(\dot{Y}) : \min_{\UpLevVars\in\UpLevSpace}\quad& \ObjFun(\UpLevVars)\\
\text{s.t.}\quad& g(\UpLevVars,\LowLevVars) \leq 0 \text{ for all } \UpLevIneqIndexForAll, \LowLevVars \in \dot{Y} \Punkt
\end{align*}
is called \emph{discretized problem}. An overview over classical discretization methods can be found in \cite{Reemtsen.1991,Reemtsen.1998b}.

An algorithm which adaptively chooses the discretization points has already been introduced by Blankenship and Falk in \cite{Blankenship.1976}. The algorithm consists in every iteration $\ItIndex$ of two steps:
\begin{itemize}
	\item (Optimization Step) Determine for a current discretization $\ItIndexset$ a solution $\ItUpLev$ of the discretized problem $\DiscSIP{\ItIndexset}$.
	\item (Refinement Step) For every $\UpLevIneqIndexForAll$ determine the index $\ItLowLev$ which  is violated most, i.e. 
	\begin{equation*}
		\UpLevIneqSingle(\ItUpLev, \ItLowLev)=\max_{\LowLevVars\in \Indexset} \UpLevIneqSingle(\ItUpLev,\LowLevVars)
	\end{equation*}
	and add these indices to the discretization.
\end{itemize}
This simple scheme has been revisited multiple times in the literature and modified versions have been published. Reemtsen used in \cite{Reemtsen.1991} a fine discretization and solved the lower-level on this fine discretization. In \cite{Tsoukalas.2011} the authors modified the algorithm by Blankenship and Falk to obtain feasible points after finitely many steps. Mitsos then used these ideas in \cite{Mitsos.2011} to obtain an outer and an inner approximation for global optimization. In \cite{Mitsos.2015} and \cite{Djelassi.2019} the ideas are extended to generalized semi-infinite optimization using a reformulation with disjunctive constraints. In his PhD-thesis \cite{Schwientek.2013} Schwientek used a transformation function to solve generalized semi-infinite optimization problems more directly with the Blankenship and Falk algorithm.

Typical convergence results include statements about properties of an accumulation point of the iterates $\ItUpLev$.
For the original algorithm they can be found for example in \cite{Blankenship.1976} and \cite{Reemtsen.1994}. In \cite{Still.2001b} Still investigated the rate of convergence for a classical discretization methods. He bounded the distance to an optimal solution of \SIP\ in terms of the mesh-size. To the best of our knowledge there is no work investigating the rate of convergence of the iterates constructed according to the Blankenship and Falk algorithm.

In the following we will present and example which shows that in general no quadratic rate of convergence will hold. The reason lies in the fact, that only the points in the discretization are considered in the Optimization Step. All other points in the index-set are not considered. For the 'final' steps towards a higher accuracy many iterations are needed. Not only the number of iterations grows, but also the number of discretization points added. Which increases the time needed for every iteration. This can make it hard to solve applications to a high precision.

To overcome this slow convergence, we propose a new adaptive discretization method having a quadratic rate of convergence. To do this, we consider the Refinement Step in which the lower-level problem is solved. Instead of only adding a global solution to the discretization we also calculate derivatives that describe the dependence of the solution on the optimization variables. Using this linear information,  we construct an additional constraint that takes into account the points that have not yet been added to the discretization. While the discretization points guarantee convergence towards feasibility, the new constraints guarantee a quadratic rate of convergence.

We explain the new approach in more detail and introduce the algorithm in Section \ref{sec:Method}. In the following sections we investigate the new method. In Section \ref{sec:Stationary} we show that if in every iteration a KKT point is calculated, than a limit point is a stationary point of the semi-infinite problem. We next assume that this limit point is strongly stable and show in Section \ref{sec:Quadratic} that the iterates converge quadratically. In Section \ref{sec:Numeric} we investigate the quadratic rate of convergence with a numerical example.

\section{A new adaptive discretization method: \algname}\label{sec:Method}
We begin this chapter with an example which shows that in general no quadratic rate of convergence can be expected for the adaptive discretization by Blankenship and Falk. As mentioned in the introduction, we want to add more information about the lower level problems in the discretized problems. An important technique will be the well known concept of the so-called Reduction Ansatz (see for example \cite{Hettich.1982,Jongen.1992,Klatte.1992}) We introduce this technique in Subsection \ref{subsec:Reduction}. We then show how the Reduction Ansatz can be used to also account for all points in $\Indexset$ which are not yet added the discretization. This considerations lead to Algorithm \algname. We end this section by revisiting Example \ref{ex:LinearConvergence} and show that the new algorithm improves the rate of convergence.

\begin{example} \label{ex:LinearConvergence}
	We consider the following semi-infinite optimization problem:
	\begin{align*}
	\SIPEx: \min_{\UpLevVars\in \UpLevSpace} \quad &-x_1+\frac{3}{2}x_2\\
	s.t. \quad&-\LowLevVarsSingle^2 +2\LowLevVarsSingle\cdot x_1-x_2 \leq 0 \quad \text{ for all } \enspace \LowLevVarsSingle\in[-1,1] \Komma\\
	&x_1,x_2 \in [-1,1] \Punkt
	\end{align*}
	The solution of the lower level problem is given by by $\LowLevVarsSingle=x_1$.
	This means that the feasible set is given by $M=\{\UpLevVars \in [-1,1]^2\mid x_2\geq x_1^2\}$.
	Using the KKT conditions the global solution can be calculated and is given by:
	\begin{equation*}
	\LimUpLev=\left(\frac{1}{3},\frac{1}{9}\right)\Punkt
	\end{equation*}
	To see that in this example, we do not have a quadratic rate of convergence, we have to investigate the sequence of iterates constructed according to the Blankenship and Falk algorithm a bit further. Starting with an empty discretization the first iterates and corresponding solution of the lower level problems are given by
	\begin{equation*}
	\ItUpLev[0]= (1,-1)^{\top},\LowLevVarsSingle^0= 1, \quad \ItUpLev[1]= (0,-1)^{\top},	\LowLevVarsSingle^1= 0\Punkt
	\end{equation*}
	Using the KKT conditions for the discretized problems, one can see that the next iterations have the following form, for suitable $k_1,k_2<k+1$: 
	\begin{equation*}
	\UpLevVarsSingle^{k+1}_1=\frac{\UpLevVarsSingle_1^{k_1}+\UpLevVarsSingle_1^{k_2}}{2} \Komma \quad \quad \UpLevVarsSingle^{k+1}_2=\UpLevVarsSingle^{k_1}_1\cdot\UpLevVarsSingle_1^{k_2} \Punkt
	\end{equation*}
	It is well known that a bisection has only a linear rate of convergence, if it does not terminate after finitely many steps. For this example a finite termination is not possible because of a divisibility argument.	
\end{example}
The reason for the linear convergence in this example is due to the strict separation of lower-level and discretized problem. In every iteration in the Optimization Step a solution is found which maximizes the violation between two discretization points. To overcome this strict separation we will use the Reduction Ansatz.
\subsection{The Reduction Ansatz} \label{subsec:Reduction}
For a more detailed introduction see for example \cite{Hettich.1982,Jongen.1992,Klatte.1992}. For every $\UpLevIneqIndexForAll$ and $\UpLevVars^* \in \FeasSet$ we denote the set of \emph{active indices} by 
\begin{equation*}
\ActiveSet{\UpLevVars^*}:=\{\LowLevVars\in \Indexset\mid \UpLevIneqSingle(\UpLevVars^*,\LowLevVars)=0\} \Punkt
\end{equation*}
\def\LowLevIneqLagRed{\boldsymbol{\mu}^{*}}
\def\LowLevIneqLagSingleRed{\mu_{\LowLevIneqIndex}^{*}}
Every $\LowLevVars^*\in \ActiveSet{\UpLevVars}$ is a global solution to the lower level problem $\LowLevProblem{\UpLevIneqIndex}{\UpLevVars^*}$. If we assume that the Linear Independence Constraint Qualification (short: \LICQ) holds, then the global solution is a KKT point (see for example \cite{Bazaraa.2006}). This means that for every $\UpLevIneqIndexForAll$ and  $\LowLevVars^*\in \ActiveSet{\UpLevVars^*}$ there are $\LowLevIneqLagRed\in \LowLevIneqSpace$ satisfying the KKT conditions, i.e.
\begin{align}
D_2 \LowLevLag_{\UpLevIneqIndex}(\UpLevVars^*,\LowLevVars^*,\LowLevIneqLagRed)=&\ 0 \Komma\label{eq:KKT1}\\
\LowLevIneqLagSingleRed \cdot\LowLevIneqSingle(\LowLevVars^*) =&\ 0 \text { for all } \LowLevIneqIndexForAll \Komma\label{eq:KKT2}\\
\LowLevIneqLagSingleRed \geq&\  0  \text { for all } \LowLevIneqIndexForAll \Komma \label{eq:KKT3}
\end{align}
where, for every $\UpLevIneqIndexForAll$ the lower-level Lagrange function is denoted by
\begin{equation*}
\LowLevLag_{\UpLevIneqIndex}(\UpLevVars^*,\LowLevVars^*,\LowLevIneqLagRed)=\UpLevIneqSingle(\UpLevVars,\LowLevVars^*) -\sum_{\LowLevIneqIndexForAll}\LowLevIneqLagSingleRed\cdot  \LowLevIneqSingle(\LowLevVars^*)\Punkt
\end{equation*}
If in \eqref{eq:KKT2} exactly one of the two multipliers is equal to zero, then \emph{strict complementary slackness} is satisfied. Moreover, assuming that \LICQ\ and strict complementary slackness are satisfied, we say that the \emph{second-order sufficient condition is satisfied}, if the following holds:
\begin{equation*}
\SOSCDir^{\top} D_2^2 \LowLevLag_{\UpLevIneqIndex}(\UpLevVars^*,\LowLevVars^*,\LowLevIneqLagRed) \SOSCDir >0 \text{ for every } \SOSCDir \in \SOSCCone{\LowLevVars^*}, \SOSCDir \neq 0  \Komma
\end{equation*}
where
\begin{equation*}
\SOSCCone{\LowLevVars^*} = \left\{ \SOSCDir\in \NonLinSpace \left|
	D \LowLevIneqSingle(\LowLevVars^*)\SOSCDir= 0 \text{ for } \UpLevIneqIndex \in \UpLevIneqIndex \text{ with } \NonLinIneqLagSingle>0
\right.\right\} \Punkt
\end{equation*}
We can recall the following definition.
\begin{definition}
	The Reduction Ansatz holds in a point $\UpLevVars^*\in \FeasSet$, if for every $\UpLevIneqIndexForAll$ and $\LowLevVars^*\in \ActiveSet{\UpLevVars^*}$, \LICQ, strict complementary slackness and the second-order sufficient condition hold.
\end{definition}

It is well known, that under the Reduction Ansatz there are, for every $\UpLevIneqIndexForAll$, only finitely many active indices, i.e.
\begin{equation*}
\ActiveSet{\LimUpLev} = \{\LowLevMult\mid 1 \leq \LowLevMultIndex\leq \LowLevMultNum\} \Komma
\end{equation*}
where $\norm{\cdot}$ denotes the euclidean norm.
We denote the open ball around a vector $\UpLevVars^*\in \UpLevSpace$ with radius $\varepsilon>$ by
\begin{equation*}
\unitball{\varepsilon}{\UpLevVars^*} := \{\UpLevVars\in \UpLevSpace \mid \norm{\UpLevVars-\UpLevVars^*}<\varepsilon\}\Punkt
\end{equation*}
If the Reduction Ansatz holds, there are $\delta>0$, $\varepsilon>0$ and, for every $\UpLevIneqIndexForAll$ and $\LowLevMultIndex\in \{1,\dots, \LowLevMultNum\}$, differentiable functions 
\begin{align}
\LowLevMult:&\ \unitball{\delta}{\LimUpLev}\rightarrow \Indexset\Komma\label{eq:Red1}\\
\LowLevMultIneqLag:&\ \unitball{\delta}{\LimUpLev} \rightarrow \LowLevIneqSpace\label{eq:Red2}
\end{align}
such that, for every $\UpLevVars \in \unitball{\delta}{\UpLevVars^*}$, the point $\LowLevMult(\UpLevVars)$ is the unique stationary point within $\unitball{\varepsilon}{\LowLevVars}$ for the $\UpLevIneqIndex$-th lower-level problem $\LowLevProblem{\UpLevIneqIndex}{\UpLevVars}$ and $\LowLevIneqLag(\UpLevVars)$ are the unique Lagrange-multipliers satisfying the KKT conditions. If the describing function $\UpLevIneq$ and $\LowLevIneq$ are more then twice continuously differentiable, then the functions $\LowLevVars^{\UpLevIneqIndex}$ and $\LowLevIneqLag$ are more than once continuously differentiable.

The following local representation is true:
\begin{equation} \label{eq:RedAnsatz}
\FeasSet\cap\unitball{\delta}{\UpLevVars^*} = \{\UpLevVars\in \unitball{\delta}{\UpLevVars^*} \mid \forall\ \UpLevIneqIndexForAll, 1\leq k \leq \LowLevMultNum : \ \UpLevIneqSingle\big(\UpLevVars, \LowLevMult(\UpLevVars)\big)\leq 0 \}\Punkt
\end{equation}

For every $\UpLevIneqIndexForAll$ and $\LowLevMultIndex\in \{1,\dots \LowLevMultNum\}$ the derivative is given by:
\begin{equation*}
D_{\tilde{\UpLevVars}}\Big[\UpLevIneqSingle\big(\tilde{\UpLevVars},\LowLevMult(\tilde{\UpLevVars})\big)\Big]_{\tilde{\UpLevVars}=\UpLevVars}= D_1\UpLevIneqSingle(\UpLevVars,\LowLevMult(\UpLevVars))\Komma
\end{equation*}
which is again a differentiable function, which means that the description in \eqref{eq:RedAnsatz} is a local formulation of the problem by finitely many twice continuously differentiable functions.

\subsection{Approximate problems with linear information}

We now show how the Reduction Ansatz can be used to receive linear information about the lower-level problems. Using this information we develop a new adaptive discretization method. In the refinement step in the $\ItIndex$-th iteration we determine, for every $\UpLevIneqIndexForAll$, a solution $\ItLowLev$ of the lower-level problem $\LowLevProblem{\UpLevIneqIndex}{\ItUpLev}$.  

In a current iterate we collect all indices for which we can calculate a derivative of the lower-level solutions. We denote the set of indices by:
\begin{equation*}
\DiffIndices := \left\{\UpLevIneqIndexForAll\left| \begin{aligned}&
\ItLowLev \text{ satisfies \LICQ, strict complementary slackness and the second-}\\&\text{order sufficient condition for } \LowLevProblem{\UpLevIneqIndex}{\ItUpLev}
\end{aligned} \right.\right\}\Punkt
\end{equation*}
If $\lim_{\ItIndex \rightarrow \infty} \ItUpLev = \LimUpLev$ and the Reduction Ansatz holds at $\LimUpLev$, we have for sufficiently large $\ItIndex$
\begin{equation*}
\DiffIndices=\UpLevIneqIndexset \Punkt
\end{equation*}
For every $\UpLevIneqIndex\in \DiffIndices$ we have, as in Equations \eqref{eq:Red1} and \eqref{eq:Red2}, differentiable functions describing the locally unique stationary point and the corresponding Lagrange multipliers satisfying the KKT conditions. Using the implicit function theorem, we can calculate derivatives $D\LowLevVars^{\UpLevIneqIndex}(\ItUpLev)$ and $D\LowLevIneqLag(\ItUpLev)$. We can develop the functions linearly:
\begin{align*}
\ItLinLowLev(\UpLevVars):=&\ItLowLev+D\LowLevVars^{\UpLevIneqIndex}(\ItUpLev)\cdot (\UpLevVars-\ItUpLev)\Komma\\
\ItLinLowLevIneqLag(\UpLevVars):= & \ItLowLevIneqLag+D\LowLevIneqLag(\ItUpLev)\cdot(\UpLevVars-\ItUpLev)\Punkt
\end{align*}

In contrast to the discretized problem, we now consider a problem where we add linear information about the lower-level problem. Therefore, we use the lower-level Lagrange function and let, for $\UpLevIneqIndex\in \DiffIndices$:
\begin{equation*}
\ItLinConstr{\UpLevVars}:= \LowLevLag_{\UpLevIneqIndex}\big(\UpLevVars,\ItLinLowLev(\UpLevVars),\ItLinLowLevIneqLag(\UpLevVars)\big) \Punkt
\end{equation*}
One of the important properties of the new constraints is that, for every $\ItIndex \in \NNumbers$ and $\UpLevIneqIndexForAll$, the following holds analogous to the Reduction Ansatz:
\begin{equation}
D\ItLinConstr{\ItUpLev} = D_1 \UpLevIneqSingle(\ItUpLev,\ItLowLev)\label{eq:DerivLinConstr} \Punkt
\end{equation} 
To receive the next iterate $\ItUpLev[\ItIndex+1]$, we solve, using a discretization $\ItIndexset[\ItIndex+1]$, the following nonlinear optimization problem:
\begin{alignat*}{3}
\LinSIP{\ItIndexset[\ItIndex+1]}:\quad  &\min_{\UpLevVars\in \UpLevSpace} \ObjFun(\UpLevVars)\span \span\\
s.t. \quad && \UpLevIneqSingle(\UpLevVars,\LowLevVars)\leq&\ 0 \text{ for all }\UpLevIneqIndexForAll, \LowLevVars \in \ItIndexset[\ItIndex+1] \Komma\\
&& \ItLinConstr[\ItIndex]{\UpLevVars}\leq&\ 0\text{ for all } \UpLevIneqIndex\in \DiffIndices\Punkt
\end{alignat*}
With the above consideration we obtain the following algorithm:
\begin{algorithm}[H]
	\caption{Quadratic convergent adaptive discretization method (\algname)} \label{alg:ModificationSIP}
	\begin{algorithmic}[1]
		\STATE Input: initial point $\ItUpLev[0]$, initial discretization $\ItIndexset[0]\subseteq \Indexset, \ItIndex=0$.	
		\WHILE{termination criterion is not met}
		\FOR{$i \in I$}
		\STATE Compute a global solution  $\ItLowLev$ and Lagrange multipliers $\ItLowLevIneqLag$ of $\LowLevProblem{\UpLevIneqIndex}{\ItUpLev}$. \label{Step:ModNextDisc}
		\STATE Determine $D\LowLevVars^{\UpLevIneqIndex}(\ItUpLev)$ and $ D\LowLevIneqLag(\ItUpLev)$  if they exist.
		\ENDFOR
		\STATE $\ItIndexset[\ItIndex+1]=\ItIndexset \cup \bigcup_{\UpLevIneqIndexForAll}\{\ItLowLev\}$.
		\STATE Determine a solution $\ItUpLev[\ItIndex+1]$ of problem $\LinSIP[\ItIndex]{\ItIndexset[\ItIndex+1]}$. \label{Step:ModNextIt}	
		\STATE $\ItIndex=\ItIndex+1$.		
		\ENDWHILE
	\end{algorithmic}
\end{algorithm}

In the remainder of this paper two central convergence results are shown:
\begin{itemize}
	\item If every iterate $\ItUpLev$ is a stationary point of $\LinSIP{\ItIndexset[\ItIndex+1]}$, then any accumulation point is a stationary point of \SIP\ (Section \ref{sec:Stationary}).
	\item The iterates converge with quadratic rate of convergence (Section \ref{sec:Quadratic}).
\end{itemize}
In Section \ref{sec:Numeric} we will present a numerical example and show the improvements of the quadratic rate of convergence. Before we start to further investigate the convergence properties of Algorithm \algname, we first consider a slightly modified version of Example \ref{ex:LinearConvergence} introduced  at the beginning of this section. This example exhibits a linear rate of convergence, if the Blankenship and Falk algorithm is used:
\begin{example}\label{ex:LinearConvergence2}
	In Example \ref{ex:LinearConvergence} replace every $x_1$ by $x_1^2$. We obtain the following problem:
	\begin{align*}
	\SIPEx: \min \quad &-x_1^2+\frac{3}{2}x_2\\
	s.t. \quad&-\LowLevVarsSingle^2 +2\LowLevVarsSingle\cdot x_1^2-x_2 \leq 0 \quad \text{ for all } \enspace \LowLevVarsSingle\in[-1,1]\Komma\\
	&x_1 \in [0,1] ,x_2 \in [-1,1] \Punkt
	\end{align*}
	The solution of the lower-level problem is given by $y=x_1^2$, which is differentiable. The first iterates of the modified algorithm can be calculated numerically. They and their distance to the optimal solution are listed in Table \ref{tab:exLinearConvergence2}. As one can see, an accuracy of $10^{-6}$ is reached in $4$ iterations. With the bisection type iterates of the Blankenship and Falk algorithm, calculated in Example \ref{ex:LinearConvergence}, approximately $20$ iterations are needed to reach the same accuracy. One can clearly see that the convergence is much faster and the iterates seem to converge quadratically.
	\begin{table}[htbp]
		\centering
		\caption{First iterates for Example \ref{ex:LinearConvergence2} constructed according to Algorithm \algname}
		\begin{tabular}{ccc}
			\toprule
			Itaration $k$ & $\ItUpLev$ & $\norm{\ItUpLev-\LimUpLev}$
			\\
			\midrule$0$&$(1,-1)$&$1.1888$\\
			$1$&$(0,-1)$&$1.2522$\\
			$2$&$(0.707107,  0)$&$0.1708$\\
			$3$&$(0.573761,   0.108057)$&$0.0047$\\
			$4$&$(0.57735, 0.111111)$&$2.1979\cdot 10^{-07}$\\
			\bottomrule
		\end{tabular}
		\label{tab:exLinearConvergence2}
	\end{table}

	Note that the substitution of $x_1$ by $x_1^2$ does not affect the rate of convergence for the Blankenship and Falk algorithm. We make this substitution as otherwise the algorithm with linear information would terminate after a single iteration.
\end{example}

\section{Convergence of stationary points}\label{sec:Stationary}
In this section we show that an accumulation point of the iterates is a stationary point of problem \SIP, if the iterates $\ItUpLev$ are stationary points (KKT points) of the approximate problem $\LinSIP[\ItIndex-1]{\ItIndexset}$.

 A feasible point, $\UpLevVars^* \in \FeasSet$ for which the Reduction Ansatz holds, is called \emph{stationary point} of \SIP, if there is a vector of multipliers $\UpLevIneqLag^*\geq 0$ such that
\begin{equation} \label{eq:KKTConditionLimit}
0=D \ObjFun(\LimUpLev) + \sum_{\UpLevIneqIndexForAll}\sum_{\LowLevVars\in \ActiveSet{\LimUpLev}}\LimUpLevIneqLagSingle_{\UpLevIneqIndex,\LowLevVars} D_1 \UpLevIneqSingle(\LimUpLev,\LowLevVars)\Punkt
\end{equation}
The real numbers $\UpLevIneqLagSingle^*_{\UpLevIneqIndex,\LowLevVars}$ are again called \emph{Lagrange multipliers}. Note that the cardinality of $\ActiveSet{\LimUpLev}$ is finite, if the Reduction Ansatz holds.

We will need two constraint qualifications which extend the classical Mangasarian-Fromovitz Constraint Qualification (short: \MFCQ) and \LICQ.
\begin{itemize}
	\item A feasible point, $\UpLevVars\in \FeasSet$, is said to satisfies the \emph{Extended Mangasarian-Fromovitz Constraint Qualification} (short: \EMFCQ), if there is a vector $\EMFCQVec \in \UpLevSpace$ with
	\begin{align*}
	D_1\UpLevIneqSingle(\UpLevVars,\LowLevVars)\EMFCQVec \leq&\ -1 \text{ for every } \UpLevIneqIndexForAll, \LowLevVars\in \ActiveSet{\UpLevVars} \Punkt
	\end{align*}	
	\item A feasible point, $\UpLevVars\in \FeasSet$, satisfies the \emph{Extended Linear Independence Constraint Qualification} (short: \ELICQ) for problem SIP, if the vectors:
	\begin{align*}
	D_1\UpLevIneqSingle(\UpLevVars,\LowLevVars),\quad  \UpLevIneqIndexForAll, \LowLevVars \in \ActiveSet{\UpLevVars}
	\end{align*}
	are linearly independent.	
\end{itemize}

It is well known  that if a point $\UpLevVars$ satisfies \ELICQ, it also satisfies \EMFCQ. Moreover, a local minimum $\UpLevVars$ which satisfies \EMFCQ\ is a stationary point (see for example \cite{Lopez.2007}).

To simplify the exposition of the following statement we make the following assumption.
\begin{assumption}\label{ass:KKTPoint}
	Let $\SeqUpLev$ be constructed according to Algorithm \algname. Assume that every $\ItUpLev$ is a KKT point of the approximate problem $\LinSIP[\ItIndex-1]{\ItIndexset}$ and there is an $\LimUpLev\in \UpLevSpace$ with
	\begin{equation*}
	\lim_{\ItIndex\rightarrow\infty} \ItUpLev=\LimUpLev \Punkt
	\end{equation*} 
	Assume that the Reduction Ansatz and holds at $\LimUpLev$.
	Further assume:
	\begin{itemize}
		\item For every $\UpLevIneqIndexForAll$, there is a $\LowLevVars\in \Indexset$ with
		\begin{equation*}
		\UpLevIneqSingle(\LimUpLev,\LowLevVars)=0 \Punkt
		\end{equation*}
		\item For every $\ItIndex\in\NNumbers$ the following holds:
		\begin{equation} \label{eq:AllIndexDiffbar}
		\DiffIndices = \UpLevIneqIndexset \Punkt
		\end{equation} 
		This means that all derivatives of the solutions of the lower-level exist.
	\end{itemize}
\end{assumption}
The assumption that the iterates converge is made to simplify the notation. For the following proof one can also choose a convergent subsequence. The last two assumptions are only made to avoid case distinctions in the proofs. If there is an $\UpLevIneqIndexForAll$ such that no active index exists, one can simply consider the problem locally without this semi-infinite constraint. As we assume the Reduction Ansatz to hold in the limit point $\LimUpLev$, there is a $\ItIndex'\in \NNumbers$ such that \eqref{eq:AllIndexDiffbar} holds for all $\ItIndex\geq \ItIndex'$. If this index is larger than $0$ we can start the algorithm again with the current point and the current discretization as initialization.

We first show that the limit point $\LimUpLev$ is feasible.
\begin{lemma}\label{lem:ModFeas}
	Let Assumption \ref{ass:KKTPoint} hold. The limit point $\LimUpLev$ is feasible. 
\end{lemma}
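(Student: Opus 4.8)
The goal is to show $\varphi_{\UpLevIneqIndex}(\LimUpLev) \le 0$ for every $\UpLevIneqIndexForAll$, i.e. that the limit point satisfies all semi-infinite constraints. The natural strategy is to argue by contradiction: suppose there is an index $\UpLevIneqIndex$ and a point $\hat{\LowLevVars} \in \Indexset$ with $\UpLevIneqSingle(\LimUpLev, \hat{\LowLevVars}) = 2\eta > 0$. The plan is to exploit the \emph{refinement step} of Algorithm \algname: in each iteration $\ItIndex$ we add to the discretization the global maximizer $\ItLowLev$ of the $\UpLevIneqIndex$-th lower-level problem at $\ItUpLev$, so that $\UpLevIneqSingle(\ItUpLev, \ItLowLev) = \varphi_{\UpLevIneqIndex}(\ItUpLev) \ge \UpLevIneqSingle(\ItUpLev, \hat{\LowLevVars})$.

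\smallskip

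\emph{Step 1 (contradiction setup and continuity).} Assume $\varphi_{\UpLevIneqIndex}(\LimUpLev) > 0$ for some $\UpLevIneqIndex$. Since $\UpLevIneq$ is continuous and $\Indexset$ is (assumed, implicitly, compact — or at least the maximizers $\ItLowLev$ stay in a bounded region), the optimal value function $\varphi_{\UpLevIneqIndex}$ is continuous; hence there is $\varepsilon > 0$ and a neighborhood of $\LimUpLev$ on which $\varphi_{\UpLevIneqIndex} \ge \varepsilon > 0$. Because $\ItUpLev \to \LimUpLev$, for all large $\ItIndex$ we have $\varphi_{\UpLevIneqIndex}(\ItUpLev) \ge \varepsilon$, so the added index satisfies $\UpLevIneqSingle(\ItUpLev, \ItLowLev) \ge \varepsilon$.

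\smallskip

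\emph{Step 2 (feasibility of later iterates with respect to added points).} For any $\ItIndex$, the point $\ItLowLev \in \ItIndexset[\ItIndex+1] \subseteq \ItIndexset[m]$ for all $m > \ItIndex$, since the discretizations are nested. As $\ItUpLev[m]$ is feasible for $\LinSIP[m-1]{\ItIndexset[m]}$, it must satisfy $\UpLevIneqSingle(\ItUpLev[m], \ItLowLev) \le 0$ for every such earlier-added index.

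\smallskip

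\emph{Step 3 (deriving the contradiction).} Combining: for large $\ItIndex$, $\UpLevIneqSingle(\ItUpLev, \ItLowLev) \ge \varepsilon$, but $\UpLevIneqSingle(\ItUpLev[m], \ItLowLev) \le 0$ for all $m > \ItIndex$. Taking $m \to \infty$ and using $\ItUpLev[m] \to \LimUpLev$ together with continuity of $\UpLevIneqSingle(\cdot, \cdot)$ gives $\UpLevIneqSingle(\LimUpLev, \ItLowLev) \le 0$. On the other hand, we need to pass to the limit in the sequence of added points $\ItLowLev$. If $\Indexset$ is compact, a subsequence of $\{\ItLowLev\}$ converges to some $\LowLevVars^\dagger \in \Indexset$; then $\UpLevIneqSingle(\ItUpLev, \ItLowLev) \ge \varepsilon$ passes to $\UpLevIneqSingle(\LimUpLev, \LowLevVars^\dagger) \ge \varepsilon$, while $\UpLevIneqSingle(\ItUpLev[m], \ItLowLev) \le 0$ (for $m$ large, $\ItIndex$ fixed along the subsequence) passes to $\UpLevIneqSingle(\LimUpLev, \LowLevVars^\dagger) \le 0$ — a contradiction. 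Hence $\varphi_{\UpLevIneqIndex}(\LimUpLev) \le 0$ for all $\UpLevIneqIndex$, i.e. $\LimUpLev \in \FeasSet$.

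\smallskip

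\emph{Main obstacle.} The delicate point is the double limit in Step 3: one must pass to the limit simultaneously in the outer iterate $\ItUpLev[m]$ and in the sequence of added discretization points $\ItLowLev$. A clean way to handle this is to fix a large $\ItIndex$ with $\UpLevIneqSingle(\ItUpLev, \ItLowLev) \ge \varepsilon$, and then note that $\ItLowLev$ is a \emph{fixed} point of $\Indexset$; the constraint $\UpLevIneqSingle(\cdot, \ItLowLev) \le 0$ holds at $\ItUpLev[m]$ for every $m > \ItIndex$, so by continuity $\UpLevIneqSingle(\LimUpLev, \ItLowLev) \le 0$. But then $\UpLevIneqSingle(\ItUpLev, \ItLowLev) \ge \varepsilon$ and $\UpLevIneqSingle(\LimUpLev, \ItLowLev) \le 0$ together with $\|\ItUpLev - \LimUpLev\| \to 0$ and \emph{uniform} continuity of $\UpLevIneqSingle$ on a compact set containing all the $\ItLowLev$ forces $\varepsilon \le \UpLevIneqSingle(\ItUpLev, \ItLowLev) - \UpLevIneqSingle(\LimUpLev, \ItLowLev) \to 0$, the contradiction — and this avoids extracting a subsequence of the $\ItLowLev$ altogether. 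This is the argument I would write out in detail; everything else is routine.
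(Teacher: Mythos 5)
Your proof is correct and rests on exactly the two facts the paper's own proof uses: the refinement step adds the global maximizer $\boldsymbol{y}^{i,k}$ to all later discretizations, so every later iterate satisfies $g_i(\boldsymbol{x}^{m},\boldsymbol{y}^{i,k})\le 0$, while $g_i(\boldsymbol{x}^{k},\boldsymbol{y}^{i,k})$ dominates the violation at any fixed index, and compactness of $Y$ (which the paper also invokes) lets one pass to the limit. The only difference is cosmetic: the paper argues directly, extracting a convergent subsequence of the maximizers and chaining $g_i(\LimUpLev,\boldsymbol{y})\le \lim_l g_i(\boldsymbol{x}^{k_l},\boldsymbol{y}^{i,k_l})\le 0$, whereas you argue by contradiction and replace the subsequence extraction by uniform continuity of $g_i$ on a compact set.
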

\begin{proof}
	By construction of the algorithm, for every $\ItIndex'\geq \ItIndex+1$ and $\UpLevIneqIndexForAll$, we have
	\begin{equation*}
	\UpLevIneqSingle(\ItUpLev[\ItIndex'],\ItLowLev)\leq 0 \Punkt
	\end{equation*}
	As $\Indexset$ is compact, we can, for every $\UpLevIneqIndexForAll$, choose a subsequence $\{\ItLowLev[\ItSubedIndex]\}_{\ItSubIndex\in \NNumbers}$  such that
	\begin{equation*}
	\lim_{\ItSubIndex \rightarrow \infty} \ItLowLev[\ItSubedIndex] = \LimLowLev \Punkt
	\end{equation*}  
	For an arbitrary point $\LowLevVars \in \Indexset $ and every $\UpLevIneqIndexForAll$, the following holds by continuity:
	\begin{align*}
	\UpLevIneqSingle(\LimUpLev,\LowLevVars)=&\ \lim_{\ItSubIndex\rightarrow \infty} \UpLevIneqSingle(\ItUpLev[\ItSubedIndex],\LowLevVars)\\
	\leq &\ \lim_{\ItSubIndex\rightarrow \infty} \UpLevIneqSingle(\ItUpLev[\ItSubedIndex],\ItLowLev[\ItSubedIndex])\\
	= &\ \UpLevIneqSingle(\lim_{\ItSubIndex\rightarrow \infty} \ItUpLev[\ItSubedIndex],\lim_{\ItSubIndex\rightarrow \infty }\ItLowLev[\ItSubedIndex]) \\
	= &\ \UpLevIneqSingle(\lim_{\ItSubIndex\rightarrow \infty} \ItUpLev[\ItIndex_{\ItSubIndex+1}],\lim_{\ItSubIndex\rightarrow \infty}\ItLowLev[\ItSubedIndex]) \\
	= &\ \lim_{\ItSubIndex\rightarrow \infty} \UpLevIneqSingle(\ItUpLev[\ItIndex_{\ItSubIndex+1}],\ItLowLev[\ItSubedIndex]) \leq 0 \Punkt
	\end{align*}
\end{proof}
It is noteworthy that exactly the same proof can be used to show the same property  for the original algorithm by Blankenship and Falk. The additional constraint is not needed. The feasibility of accumulation points is a global convergence property that is directly inherited.

To show that the limit point $\LimUpLev$ is a stationary point of \SIP\ we need to find multipliers satisfying Equation \eqref{eq:KKTConditionLimit}. The idea is to construct these multipliers as limits of the multipliers corresponding to the iterates. For every $\UpLevIneqIndexForAll$ we denote the set of active discretization points in the $\ItIndex$-th iteration by
\begin{equation*}
\ItIndexsetActive:= \{\LowLevVars\in \ItIndexset\mid \UpLevIneqSingle(\ItUpLev,\LowLevVars) = 0\} \Punkt
\end{equation*}
By assumption there exist, for every $\ItIndex\in \NNumbers$, Lagrange multipliers $\ItUpLevIneqLag\geq0$ and $\ItLinConstrLag\geq0$ such that:
\begin{equation}
0=D \ObjFun(\ItUpLev)+\sum_{\UpLevIneqIndexForAll}\sum_{\LowLevVars\in \ItIndexsetActive} \ItUpLevIneqLagSingle_{\UpLevIneqIndex,\LowLevVars} D_1 \UpLevIneqSingle(\ItUpLev,\LowLevVars) + \sum_{\UpLevIneqIndexForAll}\ItLinConstrLagSingle D \ItLinConstr[\ItIndex-1]{\ItUpLev}\Komma \label{eq:ItLagForKKT}
\end{equation}
where, for every $\UpLevIneqIndexForAll$, the multiplier $\ItLinConstrLagSingle$ is $0$ if $\ItLinConstr[\ItIndex-1]{\ItUpLev}<0$.

For the construction of the Lagrange-multipliers in \eqref{eq:KKTConditionLimit} we need to match the active indices and the additional constraints in a current iteration to the active indices in the limit. We do this with the next two lemmas, first for the active indices $\ItIndexsetActive(\ItUpLev)$, then for the additional constraints $\ItLinConstr[\ItIndex-1]{\ItUpLev}$. The proofs of these technical lemmas can be found in Appendix \ref{app:KKTPoints}.

\begin{lemma} \label{lem:ConvKKTBoundDisc}
	Let Assumption \ref{ass:KKTPoint} be satisfied. For every $\delta>0$ there is a $\ItIndex'\in \NNumbers$ such that, for all $\ItIndex\geq\ItIndex'$, $\UpLevIneqIndexForAll$ and $\LowLevVars \in \Indexset$ with
	\begin{equation*}
	\UpLevIneqSingle(\ItUpLev,\LowLevVars)=0 \Komma
	\end{equation*}
	there is a $\LimLowLev \in \ActiveSet{\LimUpLev}$ such that
	\begin{equation*}
	\norm{\LimLowLev-\LowLevVars}<\delta \Punkt
	\end{equation*}
\end{lemma}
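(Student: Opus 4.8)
The plan is to argue by contradiction together with a compactness/sequential argument that exploits the finiteness of the active index set $\ActiveSet{\LimUpLev}$ under the Reduction Ansatz. Suppose the claim fails. Then there is a $\delta>0$, a semi-infinite index $\UpLevIneqIndex\in\UpLevIneqIndexset$, and a subsequence $\{\ItIndex_{\ItSubIndex}\}_{\ItSubIndex\in\NNumbers}$ together with points $\LowLevVars^{\ItSubIndex}\in\Indexset$ satisfying $\UpLevIneqSingle(\ItUpLev[\ItIndex_{\ItSubIndex}],\LowLevVars^{\ItSubIndex})=0$ for all $\ItSubIndex$, but $\norm{\LimLowLev-\LowLevVars^{\ItSubIndex}}\geq\delta$ for every $\LimLowLev\in\ActiveSet{\LimUpLev}$. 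Since $\Indexset$ is compact, after passing to a further subsequence we may assume $\LowLevVars^{\ItSubIndex}\rightarrow\bar{\LowLevVars}\in\Indexset$, and then $\norm{\LimLowLev-\bar{\LowLevVars}}\geq\delta>0$ for all $\LimLowLev\in\ActiveSet{\LimUpLev}$, so in particular $\bar{\LowLevVars}\notin\ActiveSet{\LimUpLev}$.

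Next I would take the limit in the defining equation. By continuity of $\UpLevIneq$ and the convergence $\ItUpLev[\ItIndex_{\ItSubIndex}]\rightarrow\LimUpLev$, we get
\begin{equation*}
\UpLevIneqSingle(\LimUpLev,\bar{\LowLevVars})=\lim_{\ItSubIndex\rightarrow\infty}\UpLevIneqSingle(\ItUpLev[\ItIndex_{\ItSubIndex}],\LowLevVars^{\ItSubIndex})=0\Punkt
\end{equation*}
By Lemma \ref{lem:ModFeas} the limit point $\LimUpLev$ is feasible, hence $\bar{\LowLevVars}\in\Indexset$ together with $\UpLevIneqSingle(\LimUpLev,\bar{\LowLevVars})=0$ means precisely that $\bar{\LowLevVars}\in\ActiveSet{\LimUpLev}$ by the definition of the active index set. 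This contradicts $\bar{\LowLevVars}\notin\ActiveSet{\LimUpLev}$ obtained above, and the lemma follows.

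The argument is essentially a semicontinuity statement for the active-index multifunction, and the only place where the Reduction Ansatz (via Assumption \ref{ass:KKTPoint}) really enters is to guarantee that $\ActiveSet{\LimUpLev}$ is nonempty and finite, so that the conclusion "$\norm{\LimLowLev-\LowLevVars}<\delta$ for some $\LimLowLev\in\ActiveSet{\LimUpLev}$" is meaningful; the feasibility of $\LimUpLev$ used at the last step is exactly Lemma \ref{lem:ModFeas}. The main (and essentially only) subtlety is making sure the negation of the statement is set up correctly: one needs the $\delta$ to be uniform over all of $\ActiveSet{\LimUpLev}$ and over the choices of $\LowLevVars$, which is why the contradiction is phrased with a single sequence of points $\LowLevVars^{\ItSubIndex}$ each staying $\delta$-far from the whole finite set $\ActiveSet{\LimUpLev}$; after that, compactness of $\Indexset$ and continuity do all the work, and no use of the linearized constraints $\ItLinConstr[\ItIndex-1]{\,\cdot\,}$ or of the KKT equation \eqref{eq:ItLagForKKT} is needed.
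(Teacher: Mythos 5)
Your proof is correct and uses exactly the same ingredients as the paper's: compactness of $\Indexset$, continuity of $\UpLevIneqSingle$, feasibility of $\LimUpLev$ (Lemma \ref{lem:ModFeas}), and the fact that $\UpLevIneqSingle(\LimUpLev,\cdot)$ vanishes only on $\ActiveSet{\LimUpLev}$. The paper argues directly -- the maximum of $\UpLevIneqSingle(\LimUpLev,\cdot)$ over the compact set $\Indexset$ with the $\delta$-balls around the active indices removed is strictly negative, so by continuity $\UpLevIneqSingle(\ItUpLev,\cdot)<0$ there for large $\ItIndex$ -- while you reach the same conclusion by contradiction via sequential compactness; these are equivalent formulations of the same semicontinuity argument, so no further comment is needed.
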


By continuity and the lemma above we can, for every $\varepsilon>0$, choose a $\ItIndex'\in \NNumbers$ such that, for all $\ItIndex\geq\ItIndex'$, $\UpLevIneqIndexForAll$ and $\LowLevVars \in \ItIndexset$ with
\begin{equation*}
\UpLevIneqSingle(\ItUpLev,\LowLevVars)=0 \Komma
\end{equation*}
there is a $\LimLowLev \in \ActiveSet{\LimUpLev}$ such that
\begin{equation} \label{eq:ConvKKTBoundDeriv1}
\norm{D_1\UpLevIneqSingle(\LimUpLev,\LimLowLev)-D_1\UpLevIneqSingle(\ItUpLev,\LowLevVars)}<\varepsilon \Punkt
\end{equation}

Similarly we can also achieve a bound for the additional constraints:
\begin{lemma} \label{lem:ConvKKTBoundLinConstr}
	Let Assumption \ref{ass:KKTPoint} be satisfied. For $\UpLevIneqIndexForAll$ consider a converging subsequence $\{\ItLowLev[\ItSubedIndex]\}_{\ItSubIndex\in \NNumbers}$ with
	\begin{equation*}
	\LimLowLev:=\lim_{\ItSubIndex \rightarrow \infty} \ItLowLev[\ItSubedIndex]\Punkt
	\end{equation*}
	Then $\LimLowLev \in \ActiveSet{\LimUpLev}$. Moreover, for every $\varepsilon>0$, there is an $\ItSubIndex'\in \NNumbers$ such that, for every $\ItSubIndex>\ItSubIndex'$ and $\UpLevIneqIndexForAll$, the following holds:
	\begin{equation*}
	\norm{D_1 \UpLevIneqSingle(\LimUpLev,\LimLowLev)-D\ItLinConstr[\ItSubedIndex-1]{\ItUpLev[\ItSubedIndex]}}< \varepsilon\Punkt
	\end{equation*}
\end{lemma}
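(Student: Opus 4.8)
The plan is to split the claim into two parts: (i) the limit $\LimLowLev$ of the subsequence $\{\ItLowLev[\ItSubedIndex]\}$ lies in $\ActiveSet{\LimUpLev}$, and (ii) the derivatives $D\ItLinConstr[\ItSubedIndex-1]{\ItUpLev[\ItSubedIndex]}$ converge to $D_1\UpLevIneqSingle(\LimUpLev,\LimLowLev)$. For part (i), first observe that each $\ItLowLev[\ItSubedIndex]$ is a global solution of $\LowLevProblem{\UpLevIneqIndex}{\ItUpLev[\ItSubedIndex]}$, hence $\ItLowLev[\ItSubedIndex]\in\Indexset$ and $\UpLevIneqSingle(\ItUpLev[\ItSubedIndex],\ItLowLev[\ItSubedIndex])=\varphi_\UpLevIneqIndex(\ItUpLev[\ItSubedIndex])$. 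Since $\Indexset$ is compact, $\LimLowLev\in\Indexset$. By Lemma \ref{lem:ModFeas} the limit point $\LimUpLev$ is feasible, so $\UpLevIneqSingle(\LimUpLev,\LimLowLev)\leq 0$; it remains to rule out strict inequality. For this I would use that the construction adds $\ItLowLev[\ItSubedIndex]$ to the discretization $\ItIndexset[\ItSubedIndex+1]$, so that the next iterate satisfies $\UpLevIneqSingle(\ItUpLev[\ItSubedIndex+1],\ItLowLev[\ItSubedIndex])\leq 0$; combined with the fact that $\UpLevIneqSingle(\ItUpLev[\ItSubedIndex],\ItLowLev[\ItSubedIndex])=\max_{\LowLevVars\in\Indexset}\UpLevIneqSingle(\ItUpLev[\ItSubedIndex],\LowLevVars)\geq\UpLevIneqSingle(\ItUpLev[\ItSubedIndex],\LimLowLev)$ and passing to the limit (using $\ItUpLev[\ItSubedIndex]\to\LimUpLev$ and $\ItUpLev[\ItSubedIndex+1]\to\LimUpLev$), one gets $0\geq\limsup_{\ItSubIndex}\UpLevIneqSingle(\ItUpLev[\ItSubedIndex],\ItLowLev[\ItSubedIndex])\geq\UpLevIneqSingle(\LimUpLev,\LimLowLev)$ on one side, while $\UpLevIneqSingle(\LimUpLev,\LimLowLev)\geq\UpLevIneqSingle(\LimUpLev,\LowLevVars)$ would have to hold for all feasible $\LowLevVars$ forcing it to equal $\varphi_\UpLevIneqIndex(\LimUpLev)$, which is $0$ by the first bullet of Assumption \ref{ass:KKTPoint}. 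This pins down $\UpLevIneqSingle(\LimUpLev,\LimLowLev)=0$, i.e. $\LimLowLev\in\ActiveSet{\LimUpLev}$.

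For part (ii), the key is the identity \eqref{eq:DerivLinConstr}, which gives $D\ItLinConstr[\ItSubedIndex-1]{\ItUpLev[\ItSubedIndex-1]}=D_1\UpLevIneqSingle(\ItUpLev[\ItSubedIndex-1],\ItLowLev[\ItSubedIndex-1])$ — but note the linearization $\ItLinConstr[\ItSubedIndex-1]{\cdot}$ is built at $\ItUpLev[\ItSubedIndex-1]$, while we must evaluate its derivative at $\ItUpLev[\ItSubedIndex]$. So the argument is: $D\ItLinConstr[\ItSubedIndex-1]{\UpLevVars}$ is the derivative of an affine-composed expression $\LowLevLag_\UpLevIneqIndex(\UpLevVars,\ItLinLowLev[\ItSubedIndex-1](\UpLevVars),\ItLinLowLevIneqLag[\ItSubedIndex-1](\UpLevVars))$ whose inner functions are affine with slopes $D\LowLevVars^\UpLevIneqIndex(\ItUpLev[\ItSubedIndex-1])$, $D\LowLevIneqLag(\ItUpLev[\ItSubedIndex-1])$; at $\UpLevVars=\ItUpLev[\ItSubedIndex-1]$ this derivative equals $D_1\UpLevIneqSingle(\ItUpLev[\ItSubedIndex-1],\ItLowLev[\ItSubedIndex-1])$ by \eqref{eq:DerivLinConstr}, and at a nearby point $\ItUpLev[\ItSubedIndex]$ it differs by a term controlled by $\norm{\ItUpLev[\ItSubedIndex]-\ItUpLev[\ItSubedIndex-1]}$ times bounds on the second derivatives of $\LowLevLag_\UpLevIneqIndex$ and on the linearization slopes. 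Since $\ItUpLev[\ItSubedIndex]\to\LimUpLev$ the increments $\norm{\ItUpLev[\ItSubedIndex]-\ItUpLev[\ItSubedIndex-1]}\to 0$, so $D\ItLinConstr[\ItSubedIndex-1]{\ItUpLev[\ItSubedIndex]}$ and $D_1\UpLevIneqSingle(\ItUpLev[\ItSubedIndex-1],\ItLowLev[\ItSubedIndex-1])$ have the same limit; and $D_1\UpLevIneqSingle(\ItUpLev[\ItSubedIndex-1],\ItLowLev[\ItSubedIndex-1])\to D_1\UpLevIneqSingle(\LimUpLev,\LimLowLev)$ by continuity of $D_1\UpLevIneq$ together with $\ItUpLev[\ItSubedIndex-1]\to\LimUpLev$ and $\ItLowLev[\ItSubedIndex-1]\to\LimLowLev$ (the latter along the same subsequence, up to passing to a further subsequence which is harmless since we are computing a limit).

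The main obstacle I expect is twofold: controlling the boundedness of the linearization slopes $D\LowLevVars^\UpLevIneqIndex(\ItUpLev[\ItSubedIndex-1])$ and $D\LowLevIneqLag(\ItUpLev[\ItSubedIndex-1])$ uniformly in $\ItSubIndex$, and justifying that $\ItLowLev[\ItSubedIndex-1]$ (evaluated one index behind the running subsequence) also converges to $\LimLowLev$. The first is handled because, under the Reduction Ansatz at $\LimUpLev$, for large $\ItSubIndex$ the point $\ItUpLev[\ItSubedIndex-1]$ lies in the ball $\unitball{\delta}{\LimUpLev}$ on which the reduction functions $\LowLevMult$, $\LowLevMultIneqLag$ are continuously differentiable, so their derivatives are bounded on a compact neighbourhood; one must also argue that the local stationary point tracked by the implicit function theorem is exactly the global solution $\ItLowLev[\ItSubedIndex-1]$ computed by the algorithm, which follows once $\ItLowLev[\ItSubedIndex-1]$ is close enough to an element of $\ActiveSet{\LimUpLev}$ — an application of the same reasoning as in Lemma \ref{lem:ConvKKTBoundDisc}. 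The second point requires a short extraction argument: replace the original subsequence $\{\ItSubedIndex\}$ by one along which $\ItLowLev[\ItSubedIndex-1]$ also converges (compactness of $\Indexset$), show its limit is again in $\ActiveSet{\LimUpLev}$, and note that for the purpose of the $\varepsilon$-bound it suffices to have convergence of the derivatives along every subsequence to the same value $D_1\UpLevIneqSingle(\LimUpLev,\LimLowLev)$ — which holds because the limit is forced by the identity $\UpLevIneqSingle(\LimUpLev,\cdot)=0$ on $\ActiveSet{\LimUpLev}$ only if $\ActiveSet{\LimUpLev}$ is a singleton; in general one instead argues directly that $\norm{\ItUpLev[\ItSubedIndex]-\ItUpLev[\ItSubedIndex-1]}\to0$ and $\norm{\ItLowLev[\ItSubedIndex]-\ItLowLev[\ItSubedIndex-1]}\to0$, the latter because both $\ItLowLev[\ItSubedIndex]$ and $\ItLowLev[\ItSubedIndex-1]$ are global maximisers of nearby lower-level problems and, near $\LimUpLev$, the reduction functions are single-valued and Lipschitz, so consecutive global solutions cannot jump. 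Assembling these estimates gives the uniform bound claimed in the lemma.
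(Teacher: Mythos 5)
Your proposal takes essentially the same route as the paper: part (i) identifies $\LimLowLev$ as a global solution of the lower-level problem at $\LimUpLev$ and invokes the at-least-one-active-index assumption from Assumption \ref{ass:KKTPoint} to conclude $\LimLowLev\in\ActiveSet{\LimUpLev}$, and part (ii) is exactly the paper's triangle-inequality argument built around the identity \eqref{eq:DerivLinConstr}, splitting $D\ItLinConstr[\ItSubedIndex-1]{\ItUpLev[\ItSubedIndex]}$ into the derivative at the base point $\ItUpLev[\ItSubedIndex-1]$ (which equals $D_1\UpLevIneqSingle(\ItUpLev[\ItSubedIndex-1],\ItLowLev[\ItSubedIndex-1])$) plus a continuity term. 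The index-shift and slope-boundedness issues you flag at the end are ones the paper's own proof also passes over with a bare appeal to continuity, so they do not distinguish your argument from the paper's.
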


We can now construct multipliers which converge towards the Lagrange multipliers of $\LimUpLev$. Therefore, choose a $\delta>0$ such that, for every $\UpLevIneqIndexForAll$, the balls $\unitball{\delta}{\LowLevVars},\LowLevVars \in \ActiveSet{\LimUpLev}$, are disjoint. As $\ItUpLev$ is a stationary point, there are Lagrange-multipliers $\ItUpLevIneqLag\geq 0$ and $\ItLinConstrLag\geq 0$ as in \eqref{eq:ItLagForKKT}.

For every $\ItIndex\in \NNumbers, \UpLevIneqIndexForAll$ and $\LowLevVars \in \ActiveSet{\LimUpLev}$, let
\begin{equation}
\ItConstrLagSingle_{\UpLevIneqIndex,\LowLevVars}= \sum_{\substack{\dot{\LowLevVars} \in \ItIndexsetActive(\ItUpLev),\\ 
		\norm{\dot{\LowLevVars}-\LowLevVars}<\delta}} \ItUpLevIneqLagSingle_{\UpLevIneqIndex,\dot{\LowLevVars}} + \begin{cases}
\ItLinConstrLagSingle & \text{ if } \norm{\ItLowLev[\ItIndex-1]-\LowLevVars}<\delta\Komma\\
0 & \text{ otherwise }\Punkt
\end{cases} \label{eq:DefNewLag}
\end{equation}

We bound the constructed Lagrange multipliers in the following Lemma. A proof can be found in Appendix \ref{app:KKTPoints}.
\begin{lemma} \label{lem:KKTBounded}
	Let Assumption \ref{ass:KKTPoint} be satisfied. Assume that, for every $\ItIndex\in \NNumbers$, the current iterate $\ItUpLev$ is a stationary point of $\LinSIP[\ItIndex-1]{\ItIndexset}$ and that \EMFCQ\ is satisfied at $\LimUpLev$. There is a constant $\ConstantK>0$ such that, for sufficiently large $\ItIndex$, the following holds:
	\begin{equation*}
	\sum_{\UpLevIneqIndexForAll} \sum_{\LowLevVars\in \ActiveSet{\LimUpLev}} \ItConstrLagSingle_{\UpLevIneqIndex,\LowLevVars} \leq \ConstantK\Punkt
	\end{equation*}
\end{lemma}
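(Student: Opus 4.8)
The plan is to argue by contradiction, using \EMFCQ\ at $\LimUpLev$ after normalizing the stationarity condition \eqref{eq:ItLagForKKT}. Write $\tau_\ItIndex := \sum_{\UpLevIneqIndexForAll}\sum_{\LowLevVars\in\ActiveSet{\LimUpLev}}\ItConstrLagSingle_{\UpLevIneqIndex,\LowLevVars}$ for the quantity to be bounded and suppose, for contradiction, that $\tau_\ItIndex\to\infty$ along a subsequence. Fix $\delta>0$ as in \eqref{eq:DefNewLag}, so the balls $\unitball{\delta}{\LowLevVars}$, $\LowLevVars\in\ActiveSet{\LimUpLev}$, are pairwise disjoint. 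By Lemma \ref{lem:ConvKKTBoundDisc} (with this $\delta$) every active discretization point in $\ItIndexsetActive(\ItUpLev)$ eventually lies in one of these balls, and by Lemma \ref{lem:ConvKKTBoundLinConstr} applied to subsequences of $\{\ItLowLev[\ItIndex-1]\}$ we get $\mathrm{dist}(\ItLowLev[\ItIndex-1],\ActiveSet{\LimUpLev})\to 0$, so $\ItLowLev[\ItIndex-1]$ eventually lies in one of the balls as well; disjointness makes the containing ball unique in both cases. Hence, for large $\ItIndex$, every multiplier occurring in \eqref{eq:ItLagForKKT} is assigned to exactly one $\ItConstrLagSingle_{\UpLevIneqIndex,\LowLevVars}$ by \eqref{eq:DefNewLag}, so that $\tau_\ItIndex$ equals the full sum of the $\ItUpLevIneqLagSingle_{\UpLevIneqIndex,\LowLevVars}$ and $\ItLinConstrLagSingle$ appearing there; in particular $\tau_\ItIndex>0$ eventually and each normalized weight $\ItConstrLagSingle_{\UpLevIneqIndex,\LowLevVars}/\tau_\ItIndex$ lies in $[0,1]$.

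Next I would divide \eqref{eq:ItLagForKKT} by $\tau_\ItIndex$ and regroup the gradient terms according to the ball containing the corresponding index. By \eqref{eq:ConvKKTBoundDeriv1} the gradients $D_1\UpLevIneqSingle(\ItUpLev,\LowLevVars)$ of active discretization points in $\unitball{\delta}{\LowLevVars}$ converge to $D_1\UpLevIneqSingle(\LimUpLev,\LowLevVars)$, and by the gradient estimate in Lemma \ref{lem:ConvKKTBoundLinConstr} the same holds for $D\ItLinConstr[\ItIndex-1]{\ItUpLev}$ whenever $\ItLowLev[\ItIndex-1]\in\unitball{\delta}{\LowLevVars}$. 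Since there are only finitely many groups and each group weight is at most $1$, replacing every gradient in a group by the single limit gradient costs an error that tends to $0$; together with $\tfrac{1}{\tau_\ItIndex}D\ObjFun(\ItUpLev)\to 0$ this turns the normalized \eqref{eq:ItLagForKKT} into
\[
0 = \sum_{\UpLevIneqIndexForAll}\sum_{\LowLevVars\in\ActiveSet{\LimUpLev}}\frac{\ItConstrLagSingle_{\UpLevIneqIndex,\LowLevVars}}{\tau_\ItIndex}\,D_1\UpLevIneqSingle(\LimUpLev,\LowLevVars) + R_\ItIndex ,\qquad R_\ItIndex\to 0 .
\]
The normalized weights are nonnegative and, by the previous paragraph, sum to $1$; they therefore lie in a nonempty compact simplex (nonempty because $\ActiveSet{\LimUpLev}\neq\emptyset$ for every $\UpLevIneqIndexForAll$ by Assumption \ref{ass:KKTPoint}), so after passing to a further subsequence they converge to numbers $\beta^*_{\UpLevIneqIndex,\LowLevVars}\geq 0$ with $\sum_{\UpLevIneqIndexForAll}\sum_{\LowLevVars\in\ActiveSet{\LimUpLev}}\beta^*_{\UpLevIneqIndex,\LowLevVars}=1$.

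Passing to the limit in the displayed identity leaves $0=\sum_{\UpLevIneqIndexForAll}\sum_{\LowLevVars\in\ActiveSet{\LimUpLev}}\beta^*_{\UpLevIneqIndex,\LowLevVars}\,D_1\UpLevIneqSingle(\LimUpLev,\LowLevVars)$. Contracting with the \EMFCQ\ vector $\EMFCQVec$ at $\LimUpLev$ and using $D_1\UpLevIneqSingle(\LimUpLev,\LowLevVars)\EMFCQVec\leq -1$ for all $\UpLevIneqIndexForAll$, $\LowLevVars\in\ActiveSet{\LimUpLev}$ yields $0\leq -\sum_{\UpLevIneqIndexForAll}\sum_{\LowLevVars\in\ActiveSet{\LimUpLev}}\beta^*_{\UpLevIneqIndex,\LowLevVars}=-1$, a contradiction. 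Hence $\tau_\ItIndex$ is bounded for large $\ItIndex$, and the lemma follows with $\ConstantK$ any such bound.

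The step I expect to be the main obstacle is the bookkeeping in the regrouping: one must ensure that for large $\ItIndex$ the assignment in \eqref{eq:DefNewLag} is exhaustive --- no active discretization point and no lower-level solution $\ItLowLev[\ItIndex-1]$ is left outside every ball --- and unambiguous --- each index lies in only one ball. Unambiguity is just the disjointness of the balls, built into the choice of $\delta$ in \eqref{eq:DefNewLag}; exhaustiveness is exactly what the compactness arguments underpinning Lemmas \ref{lem:ConvKKTBoundDisc} and \ref{lem:ConvKKTBoundLinConstr} supply, once applied to arbitrary subsequences of $\{\ItLowLev[\ItIndex-1]\}$. The remaining ingredients --- the normalization, the simplex compactness, and the \EMFCQ\ contradiction --- are the standard argument for boundedness of Lagrange multipliers under a Mangasarian--Fromovitz-type condition.
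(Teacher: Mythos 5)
Your proof is correct, but it takes a different route than the paper's. The paper's argument is a direct estimate: using continuity together with Lemma \ref{lem:ConvKKTBoundDisc} (in the form \eqref{eq:ConvKKTBoundDeriv1}) and Lemma \ref{lem:ConvKKTBoundLinConstr}, it chooses $\ItIndex'$ so that for all $\ItIndex\ge\ItIndex'$ every gradient occurring in \eqref{eq:ItLagForKKT} satisfies $D_1\UpLevIneqSingle(\ItUpLev,\LowLevVars)\,\EMFCQVec\le-0.5$ and $D\ItLinConstr[\ItIndex-1]{\ItUpLev}\,\EMFCQVec\le-0.5$, and then simply multiplies the stationarity identity \eqref{eq:ItLagForKKT} by the \EMFCQ\ direction $\EMFCQVec$, obtaining $0\le D\ObjFun(\ItUpLev)\EMFCQVec-0.5\sum_{\UpLevIneqIndexForAll}\sum_{\LowLevVars\in\ActiveSet{\LimUpLev}}\ItConstrLagSingle_{\UpLevIneqIndex,\LowLevVars}$; the boundedness of $D\ObjFun(\ItUpLev)$ then yields an explicit constant $\ConstantK$. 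You instead run the classical normalization-and-contradiction argument: assume the multiplier mass blows up, divide by it, regroup the gradients around the finitely many active indices, pass to a limit point of the simplex, and contradict \EMFCQ. Both proofs hinge on the same two lemmas and the same vector $\EMFCQVec$, but the direct version is lighter: it does not need your exhaustiveness step (that every active discretization point and every $\ItLowLev[\ItIndex-1]$ eventually lies in one of the balls used in \eqref{eq:DefNewLag}) --- for an upper bound it suffices that disjointness prevents double counting --- and it does not need the gradients to converge to specific active-index gradients, only that their inner products with $\EMFCQVec$ are eventually at most $-1/2$; it also produces an explicit bound. Your additional bookkeeping does go through (Lemma \ref{lem:ConvKKTBoundDisc} gives exhaustiveness for the active discretization points, and a compactness/subsequence application of Lemma \ref{lem:ConvKKTBoundLinConstr} gives it for $\ItLowLev[\ItIndex-1]$, with uniqueness of the containing ball from disjointness), so there is no gap; the trade-off is only that the contradiction route is longer and constant-free.
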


We are ready to prove that the limit point is again a stationary point.
\begin{theorem} \label{th:ConvergenceKKT}
	Let Assumption \ref{ass:KKTPoint} be satisfied. Assume that, for every $\ItIndex\in \NNumbers$, the current iterate $\ItUpLev$ is a KKT point of $\LinSIP[\ItIndex-1]{\ItIndexset}$ and that \EMFCQ\ is satisfied at $\LimUpLev$. Then $\LimUpLev$ is a stationary point of $\SIP$.
\end{theorem}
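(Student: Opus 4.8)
The plan is to obtain the multipliers in \eqref{eq:KKTConditionLimit} as limits of the regrouped iterate-multipliers $\ItConstrLagSingle_{\UpLevIneqIndex,\LowLevVars}$ defined in \eqref{eq:DefNewLag}, by passing to the limit along a subsequence in the KKT identity \eqref{eq:ItLagForKKT}. Fix $\delta>0$ such that, for every $\UpLevIneqIndexForAll$, the balls $\unitball{\delta}{\LowLevVars}$ with $\LowLevVars\in\ActiveSet{\LimUpLev}$ are pairwise disjoint; recall that these index sets are finite because the Reduction Ansatz holds at $\LimUpLev$. The first step is to verify that \eqref{eq:DefNewLag} really redistributes all of the multiplier mass of \eqref{eq:ItLagForKKT}: for $\ItIndex$ large, every active discretization point $\dot{\LowLevVars}\in\ItIndexsetActive$ lies in exactly one of these balls (by Lemma \ref{lem:ConvKKTBoundDisc} together with disjointness), and so does every linearization centre $\ItLowLev[\ItIndex-1]$, since every convergent subsequence of $\{\ItLowLev\}_{\ItIndex\in\NNumbers}$ has its limit in $\ActiveSet{\LimUpLev}$ (first part of Lemma \ref{lem:ConvKKTBoundLinConstr}) and $\ActiveSet{\LimUpLev}$ is finite. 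Hence, for each $\UpLevIneqIndexForAll$ and all large $\ItIndex$,
\[
\sum_{\LowLevVars\in\ActiveSet{\LimUpLev}}\ItConstrLagSingle_{\UpLevIneqIndex,\LowLevVars}=\sum_{\LowLevVars\in\ItIndexsetActive}\ItUpLevIneqLagSingle_{\UpLevIneqIndex,\LowLevVars}+\ItLinConstrLagSingle\Komma
\]
and \eqref{eq:ItLagForKKT} can be rewritten as
\[
0=D\ObjFun(\ItUpLev)+\sum_{\UpLevIneqIndexForAll}\sum_{\LowLevVars\in\ActiveSet{\LimUpLev}}\ItConstrLagSingle_{\UpLevIneqIndex,\LowLevVars}\,D_1\UpLevIneqSingle(\LimUpLev,\LowLevVars)+E_{\ItIndex}\Komma
\]
where $E_{\ItIndex}$ collects the differences between each $D_1\UpLevIneqSingle(\LimUpLev,\LowLevVars)$ and the gradient actually attached to it in \eqref{eq:ItLagForKKT} --- namely $D_1\UpLevIneqSingle(\ItUpLev,\dot{\LowLevVars})$ for a discretization point $\dot{\LowLevVars}$ in the ball of $\LowLevVars$, or $D\ItLinConstr[\ItIndex-1]{\ItUpLev}$ when the centre $\ItLowLev[\ItIndex-1]$ lies in that ball.

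The second step is to show $E_{\ItIndex}\to0$. Fix $\varepsilon>0$. For the discretization contributions, \eqref{eq:ConvKKTBoundDeriv1} provides, for $\ItIndex$ large, an active index within $\varepsilon$ in the gradient; by disjointness of the $\delta$-balls this index must be the very centre into which the discretization point was grouped. For the linearization contributions, Lemma \ref{lem:ConvKKTBoundLinConstr} gives the analogous $\varepsilon$-bound along any convergent subsequence of $\{\ItLowLev\}$, again with the corresponding ball centre as the matching active index. Using $\ItUpLevIneqLagSingle_{\UpLevIneqIndex,\LowLevVars}\geq0$, $\ItLinConstrLagSingle\geq0$, the mass identity above, and the bound from Lemma \ref{lem:KKTBounded}, one obtains, for $\ItIndex$ large, $\norm{E_{\ItIndex}}\leq\varepsilon\sum_{\UpLevIneqIndexForAll}\sum_{\LowLevVars\in\ActiveSet{\LimUpLev}}\ItConstrLagSingle_{\UpLevIneqIndex,\LowLevVars}\leq\ConstantK\varepsilon$. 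Combining this with $\norm{D\ObjFun(\ItUpLev)-D\ObjFun(\LimUpLev)}\to0$ gives: for every $\varepsilon>0$ there is $\ItIndex'\in\NNumbers$ such that
\[
\Bigl\|\,D\ObjFun(\LimUpLev)+\sum_{\UpLevIneqIndexForAll}\sum_{\LowLevVars\in\ActiveSet{\LimUpLev}}\ItConstrLagSingle_{\UpLevIneqIndex,\LowLevVars}\,D_1\UpLevIneqSingle(\LimUpLev,\LowLevVars)\,\Bigr\|\leq(\ConstantK+1)\varepsilon\quad\text{for all }\ItIndex\geq\ItIndex'\Punkt
\]

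The third step is the limit passage. By Lemma \ref{lem:KKTBounded} the vectors $(\ItConstrLagSingle_{\UpLevIneqIndex,\LowLevVars})_{\UpLevIneqIndexForAll,\,\LowLevVars\in\ActiveSet{\LimUpLev}}$ are nonnegative and lie in a bounded subset of a fixed finite-dimensional space, so a subsequence converges to a nonnegative limit $(\LimUpLevIneqLagSingle_{\UpLevIneqIndex,\LowLevVars})$; refine it further, using compactness of $\Indexset$, so that each sequence $\{\ItLowLev[\ItIndex-1]\}$ converges, which is what makes Lemma \ref{lem:ConvKKTBoundLinConstr} applicable as used above. Passing to the limit along this subsequence in the last display, with $\varepsilon$ fixed, yields the same bound for $(\LimUpLevIneqLagSingle_{\UpLevIneqIndex,\LowLevVars})$; since $\varepsilon>0$ was arbitrary the norm is zero, i.e.\ \eqref{eq:KKTConditionLimit} holds with these nonnegative multipliers. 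Together with feasibility of $\LimUpLev$ (Lemma \ref{lem:ModFeas}) and the Reduction Ansatz at $\LimUpLev$ (Assumption \ref{ass:KKTPoint}), this is exactly the definition of a stationary point of $\SIP$.

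The substantive obstacle --- keeping the regrouped multipliers bounded --- has already been isolated in Lemma \ref{lem:KKTBounded}, and that is also where \EMFCQ\ enters. Within the theorem itself the delicate point is purely combinatorial bookkeeping: confirming that for $\ItIndex$ large the disjoint $\delta$-balls capture every active discretization point and every linearization centre (so that \eqref{eq:DefNewLag} is a genuine partition of the mass, with nothing lost), and lining up the index-shifted objects --- the model $\ItLinConstr[\ItIndex-1]{\cdot}$ is built at $\ItUpLev[\ItIndex-1]$ from $\ItLowLev[\ItIndex-1]$, while its multiplier $\ItLinConstrLagSingle$ appears in the optimality system at $\ItUpLev$ --- with the correct active index of $\LimUpLev$, which is exactly the role played by Lemmas \ref{lem:ConvKKTBoundDisc} and \ref{lem:ConvKKTBoundLinConstr}.
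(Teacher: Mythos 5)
Your proposal is correct and follows essentially the same route as the paper's proof: the same regrouped multipliers from \eqref{eq:DefNewLag}, the same use of Lemmas \ref{lem:ModFeas}, \ref{lem:ConvKKTBoundDisc}, \ref{lem:ConvKKTBoundLinConstr} and \ref{lem:KKTBounded} to obtain an $\varepsilon$-bound on the stationarity residual at $\LimUpLev$, and the same passage to a convergent subsequence of the bounded multipliers. The only difference is presentational: you make explicit the mass-accounting step (that for large $\ItIndex$ every active discretization point and the linearization centre fall into the disjoint $\delta$-balls) and the subsequence refinement needed to invoke Lemma \ref{lem:ConvKKTBoundLinConstr}, points the paper treats implicitly.
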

\begin{proof}
	Choose an arbitrary $\varepsilon>0$ and a $\delta>0$ such that, for every $\UpLevIneqIndexForAll$, the balls $\unitball{\delta}{\LowLevVars},\LowLevVars \in \ActiveSet{\LimUpLev}$, are disjoint. As we know by Lemma \ref{lem:ModFeas} that the limit point $\LimUpLev$ is feasible, it remains to show that we have multipliers satisfying Equation \eqref{eq:KKTConditionLimit}.
	
	By Lemma \ref{lem:ConvKKTBoundDisc} we can choose a $\ItIndex_1\in \NNumbers$ such that, for every $\ItIndex\geq\ItIndex_1, \UpLevIneqIndexForAll$ and $\LowLevVars \in \ItIndexsetActive$, there is a $\LimLowLev \in \ActiveSet{\LimUpLev}$ with
	\begin{align*}
	\norm{\LowLevVars-\LimLowLev} <&\ \delta\Komma\\
	\norm{D_1 \UpLevIneqSingle(\ItUpLev,\LowLevVars) -D_1 \UpLevIneqSingle(\LimUpLev,\LimLowLev)} <&\ \frac{\varepsilon}{2\ConstantK}\Komma
	\end{align*}
	where $\ConstantK$ is chosen as in Lemma \ref{lem:KKTBounded}. By Lemma \ref{lem:ConvKKTBoundLinConstr} we can choose $\ItIndex_2 \in \NNumbers$ such that, for every  $\ItIndex\geq\ItIndex_2$ and $\UpLevIneqIndexForAll$, there is a $\LimLowLev \in \ActiveSet{\LimUpLev}$ with: 
	\begin{align}
	\norm{\ItLowLev[\ItIndex-1] - \LimLowLev}<&\ \delta \Komma\nonumber\\
	\norm{\ItLinConstr[\ItIndex-1]{\ItUpLev} - D_1\UpLevIneqSingle(\LimUpLev,\LimLowLev)}<&\ \frac{\varepsilon}{2\ConstantK} \Punkt\label{eq:LemConvKKTBoundLin}
	\end{align}
	Moreover, by the convergence of $\SeqUpLev$ and the continuous differentiability of the objective, there is a $\ItIndex_3\in \NNumbers$ such that, for $\ItIndex\geq\ItIndex_3$, the following holds:
	\begin{equation*}
	\norm{D\ObjFun(\LimUpLev)-D\ObjFun(\ItUpLev)}< \frac{\varepsilon}{2}\Punkt
	\end{equation*}
	
	Let $\ItIndex':= \max\{\ItIndex_1,\ItIndex_2,\ItIndex_3\}$. Combining, for $\ItIndex\geq\ItIndex'$, the three inequalities with the stationary condition for $\ItUpLev$ (see Equation \eqref{eq:ItLagForKKT}), the following holds:
	\begin{align}
	&\ \nonumber \normbig{D\ObjFun(\LimUpLev) +\sum_{\UpLevIneqIndexForAll}\sum_{\LowLevVars\in \ActiveSet{\LimUpLev}} \ItConstrLagSingle_{\UpLevIneqIndex,\LowLevVars} D \UpLevIneqSingle(\LimUpLev,\LowLevVars)}\\\nonumber 
	\leq&\ \norm{D\ObjFun(\LimUpLev)-D\ObjFun(\ItUpLev)}\\\nonumber  &\ + \sum_{\UpLevIneqIndexForAll}\sum_{\LowLevVars\in \ActiveSet{\LimUpLev}} \sum_{\substack{
			\dot{\LowLevVars}\in \ItIndexsetActive\\\nonumber  \norm{\dot{\LowLevVars}-\LowLevVars}<\delta
	}} \ItUpLevIneqLagSingle_{\UpLevIneqIndex,\dot{\LowLevVars}} \normbig{D_1\UpLevIneqSingle(\LimUpLev,\LowLevVars)-D_1\UpLevIneqSingle(\ItUpLev,\dot{\LowLevVars})}\\\nonumber 
	&\ +\sum_{\UpLevIneqIndexForAll}\ItLinConstrLagSingle \normbig{D_1\UpLevIneqSingle(\LimUpLev,\LimLowLev) - D \ItLinConstr[\ItIndex-1]{\ItUpLev}}\\\nonumber 
	<&\ \frac{\varepsilon}{2} +\frac{\varepsilon}{2\ConstantK} \cdot\left(\sum_{\UpLevIneqIndexForAll} \sum_{\LowLevVars\in \ActiveSet{\LimUpLev}} \ItConstrLagSingle_{\UpLevIneqIndex,\LowLevVars}\right)\\
	\leq &\ \varepsilon \Komma\label{eq:ConvKKTproofIneq}
	\end{align}
	where, for every $\UpLevIneqIndexForAll$ and $\ItIndex\geq \ItIndex$, the active index $\LimLowLev$ is chosen according to \eqref{eq:LemConvKKTBoundLin}. As, for every $\UpLevIneqIndexForAll$ and $\LowLevVars\in\ActiveSet{\LimUpLev}$, the multiplier $\ItConstrLagSingle_{\UpLevIneqIndex,\LowLevVars}$ are bounded by Lemma \ref{lem:KKTBounded}, we can choose a subsequence $\{\ItUpLev[\ItSubedIndex]\}_{\ItSubIndex\in \NNumbers}$ such that, for every $\UpLevIneqIndexForAll$ and $\LowLevVars\in \ActiveSet{\LimUpLev}$:
	\begin{equation*}
	\LimUpLevIneqLagSingle_{\UpLevIneqIndex,\LowLevVars} := \lim_{\ItSubIndex \rightarrow \infty}\ItConstrLagSingle[\ItSubedIndex]_{\UpLevIneqIndex,\LowLevVars}
	\end{equation*} 
	exists. From the inequality in Equation \eqref{eq:ConvKKTproofIneq} it follows:
	\begin{equation*}
	Df(\LimUpLev)+\sum_{\UpLevIneqIndexForAll}\sum_{\LowLevVars\in \ActiveSet{\LimUpLev}} \LimUpLevIneqLagSingle_{\UpLevIneqIndex,\LowLevVars} D_1\UpLevIneqSingle(\LimUpLev,\LowLevVars) = 0 \Punkt
	\end{equation*}
\end{proof}


\section{Quadratic rate of convergence}\label{sec:Quadratic}
After we have shown that a limit point is a stationary point, we proof now that the iterates converge with a quadratic rate towards this limit point. One of the ideas of the proof is to interpret the current iterate as a stationary point of a permuted semi-infinite optimization problem. A possibility to bound the distance of stationary point of a permuted problem to a stationary point of the original problem is the so-called strong stability. This concept was originally introduced by Kojima in \cite{Kojima.1980} and Robinson in \cite{Robinson.1980} and extended to the case of semi-infinite problems by Rückmann in \cite{Ruckmann.1999}. In the first subsection we summarize the concept and formulate two corollaries which we need to proof the quadratic rate of convergence. In Subsection \ref{subsec:Quad} we then use the strong stability and present a proof for the quadratic rate of convergence.

\subsection{Strong stability of stationary points}

For a bounded open set, $\StabOpenSubset \subseteq \UpLevSpace$, and twice continuously differentiable functions, $\PermObjFun:\UpLevSpace \rightarrow \RNumbers$ and $\PermUpLevIneq:\UpLevSpace\times\LowLevSpace \rightarrow  \UpLevIneqSpace$, we measure the maximal deviation of these functions by
\begin{equation*}
\mathrm{norm}( \PermObjFun,\PermUpLevIneq,U) = \max\left\{ \begin{aligned}
&\sup_{\UpLevVars\in U}\max\big\{|\PermObjFun(\UpLevVars)|,\norm{D\PermObjFun(\UpLevVars)},\norm{D^2\PermObjFun(\UpLevVars)}\big\},\\
&\max_{\UpLevIneqIndexForAll}\sup_{\UpLevVars\in U}\max_{\LowLevVars\in \Indexset}\max\big\{|\PermUpLevIneqSingle(\UpLevVars,\LowLevVars)|,\norm{D\PermUpLevIneqSingle(\UpLevVars,\LowLevVars)},\norm{D^2\PermUpLevIneqSingle(\UpLevVars,\LowLevVars)}\big\}
\end{aligned}\right\}\Punkt
\end{equation*}
For $\delta > 0$ let
\begin{equation*}
\PermSet{\delta}{\StabOpenSubset}
:=\left\{ (\PermObjFun,\PermUpLevIneq)\left| \begin{matrix}
\PermObjFun: \UpLevSpace \rightarrow \RNumbers \text{ twice continuosly differentiable,}\\[0.2em]
\PermUpLevIneq: \UpLevSpace\times \LowLevSpace \rightarrow \UpLevIneqSpace \text{ twice continuously differentiable},\\[0.2em]
\mathrm{norm}(\PermObjFun,\PermUpLevIneq,U)<\delta
\end{matrix}  \right.\right\} \Punkt
\end{equation*}

We can define strong stability in the semi-infinite case.
\begin{definition} (Strong stability for \SIP\ \cite{Ruckmann.1999}) \label{def:SIPStrongStab}
	A stationary point $\UpLevVars^*$ of \SIP\ is called \emph{strongly stable}, if there is a $\varepsilon^*>0$ with the property that for every $\varepsilon\in (0,\varepsilon^*]$ there is a $\delta> 0$ such that, for every $(\PermObjFun,\PermUpLevIneq)\in \PermSet{\delta}{\unitball{\varepsilon^*}{\UpLevVars^*}}$,
	the problem 
	\begin{align*}
	\textsf{SIP}(\PermObjFun,\PermUpLevIneq): \min_{\UpLevVars\in \UpLevSpace} \quad& \ObjFun(\UpLevVars) +\PermObjFun(\UpLevVars)\\
	\text{s.t. } \quad& \UpLevIneqSingle(\UpLevVars,\LowLevVars)+\PermUpLevIneqSingle(\UpLevVars,\LowLevVars) \leq 0 \text{ for all } \UpLevIneqIndexForAll, \LowLevVars\in \Indexset
	\end{align*}
	has a within $\unitball{\varepsilon^*}{\UpLevVars^*}$ unique stationary point $\UpLevVars(\PermObjFun,\PermUpLevIneq)$ and 
	\begin{equation*}
	\norm{\UpLevVars^*-\UpLevVars(\PermObjFun,\PermUpLevIneq)}<\varepsilon \Punkt
	\end{equation*}
	We denote by $\FeasSet(\PermObjFun,\PermUpLevIneq)$ the feasible set of problem $\SIP(\PermObjFun,\PermUpLevIneq)$.
\end{definition}

We will need two properties which follow from the equivalent characterizations given by Rückmann in \cite{Ruckmann.1999}. We will present their proofs in Appendix \ref{app:Strong}. The first statement covers the case if \ELICQ\ is satisfied.
\begin{corollary} \label{cor:Strong1}
	Let $\UpLevVars^*$ be a strongly stable stationary point of \SIP\ with Lagrange multipliers $\UpLevIneqLag$. Assume \ELICQ\  is satisfied at $\UpLevVars^*$ and the Reduction Ansatz holds. Then there are $\delta>0 ,\varepsilon>0$ and an $L>0$ such that, for every $(\PermObjFun,\PermUpLevIneq) \in \PermSet{\delta}{\unitball{\varepsilon}{\UpLevVars}}$, the permuted problem, $\SIP(\PermObjFun,\PermUpLevIneq)$, has a stationary point, $\UpLevVars(\tilde{f},\tilde{g})$, with
	\begin{equation*}
	\norm{\UpLevVars^*-\UpLevVars(\PermObjFun,\PermUpLevIneq)}\leq L \cdot \mathrm{norm}\big(\tilde{f},\tilde{g},\unitball{\varepsilon}{\UpLevVars^*}\big) \Komma
	\end{equation*}
	which is unique within $\unitball{\varepsilon}{\UpLevVars^*}$.
\end{corollary}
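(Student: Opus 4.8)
The plan is to derive the Lipschitz-type estimate from Rückmann's characterization of strong stability under \ELICQ. First I would invoke the Reduction Ansatz at $\UpLevVars^*$ to pass from the semi-infinite problem to a locally equivalent finite nonlinear program: on a ball $\unitball{\delta_0}{\UpLevVars^*}$ the feasible set $\FeasSet$ coincides with the set described by the finitely many functions $\UpLevVars\mapsto\UpLevIneqSingle(\UpLevVars,\LowLevMult(\UpLevVars))$, $\UpLevIneqIndexForAll$, $1\leq\LowLevMultIndex\leq\LowLevMultNum$ (see \eqref{eq:RedAnsatz}), whose gradients at $\UpLevVars^*$ are exactly the $D_1\UpLevIneqSingle(\UpLevVars^*,\LowLevVars)$, $\LowLevVars\in\ActiveSet{\UpLevVars^*}$, which are linearly independent by \ELICQ. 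For a perturbation $(\PermObjFun,\PermUpLevIneq)\in\PermSet{\delta}{\unitball{\varepsilon}{\UpLevVars^*}}$ with $\delta$ small enough, the Reduction Ansatz also holds for the perturbed lower-level problems, so $\SIP(\PermObjFun,\PermUpLevIneq)$ is locally equivalent to a perturbed finite program whose defining functions depend on $(\PermObjFun,\PermUpLevIneq)$ in a way that is controlled, in $C^1$-norm on $\unitball{\varepsilon}{\UpLevVars^*}$, by $\mathrm{norm}(\PermObjFun,\PermUpLevIneq,\unitball{\varepsilon}{\UpLevVars^*})$; this uses that the reduction functions $\LowLevMult$ and $\LowLevMultIneqLag$ depend differentiably on the problem data through the implicit function theorem applied to the lower-level KKT system.

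Next I would combine strong stability of $\UpLevVars^*$ as a stationary point of \SIP\ (Definition \ref{def:SIPStrongStab}) with \ELICQ\ to get strong stability, in Kojima's sense, of $\UpLevVars^*$ as a stationary point of the reduced finite program, and then apply Rückmann's equivalent characterization from \cite{Ruckmann.1999}: under \ELICQ\ the perturbed reduced program has, within $\unitball{\varepsilon}{\UpLevVars^*}$, a unique stationary point $\UpLevVars(\PermObjFun,\PermUpLevIneq)$, and the stationary-point map is locally Lipschitz with respect to $C^1$-perturbations of the data. Because \ELICQ\ means the active gradients form a full-rank system, the KKT system for the reduced problem is a regular (non-degenerate) equation in $(\UpLevVars,\UpLevIneqLag)$ at $\UpLevVars^*$; an implicit-function-theorem argument (or directly the Lipschitz estimate in \cite{Ruckmann.1999}) then yields a constant $L_0$ with $\norm{\UpLevVars^*-\UpLevVars(\PermObjFun,\PermUpLevIneq)}\leq L_0\cdot(\text{$C^1$-size of the perturbation of the reduced data})$. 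Chaining this with the bound from the previous paragraph — reduced-data perturbation controlled by $\mathrm{norm}(\PermObjFun,\PermUpLevIneq,\unitball{\varepsilon}{\UpLevVars^*})$ — produces the claimed inequality with $L=L_0\cdot C$, after shrinking $\varepsilon,\delta$ once more so that $\UpLevVars(\PermObjFun,\PermUpLevIneq)$ stays in the region where the reduction representation is valid and the stationary point of the reduced problem is genuinely a stationary point of $\SIP(\PermObjFun,\PermUpLevIneq)$.

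The main obstacle I expect is the bookkeeping in the reduction step: one must make sure that the perturbation stays small enough that (i) the Reduction Ansatz persists for every perturbed lower-level problem and the number $\LowLevMultNum$ and the active set structure do not change, and (ii) the dependence of the reduced defining functions on $(\PermObjFun,\PermUpLevIneq)$ is quantitatively Lipschitz in the $\mathrm{norm}(\cdot)$-metric, not merely continuous — this is where the twice-continuous-differentiability hypotheses and the uniformity over $\LowLevVars\in\Indexset$ in the definition of $\mathrm{norm}$ are used. Once that uniform reduction is in place, the rest is Rückmann's characterization under \ELICQ\ plus a standard implicit-function/Lipschitz argument for the regular KKT system; since the precise statements being invoked are collected in Appendix \ref{app:Strong}, here I only need to record how they assemble into the stated estimate.
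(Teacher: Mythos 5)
Your proposal is correct and follows essentially the same route as the paper, whose proof simply invokes the proof of Theorem 2 in \cite{Ruckmann.1999}: under \ELICQ\ and the Reduction Ansatz, the Lipschitz invertibility of the Kojima-type map characterizing stationary points yields existence, local uniqueness within the ball and the Lipschitz bound, which is exactly what your reduction-plus-chaining argument reconstructs explicitly. The only adjustment needed is to drop the claim that \ELICQ\ alone makes the reduced KKT system a smoothly regular equation in $(\UpLevVars,\UpLevIneqLag)$ (strong stability under \ELICQ\ does not give upper-level strict complementarity, so the classical implicit function theorem need not apply) and rely, as your parenthesis already does, on the Lipschitz estimate established in \cite{Ruckmann.1999}.
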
 
The second statement covers the case if \ELICQ\ is not satisfied but \EMFCQ\ is satisfied

\begin{corollary} \label{cor:Strong2}
	Let $\UpLevVars^*$ be a stationary point of \SIP. Suppose that the Reduction Ansatz and \EMFCQ\ are satisfied at $\UpLevVars^*$ and \ELICQ\ is not satisfied. Then $\UpLevVars^*$ is a strict local minimum of order two, i.e. there is a $\varepsilon>0$ and a $L>0$ such that, for every $\UpLevVars\in \unitball{\varepsilon}{\UpLevVars^*}$, the following holds:
	\begin{equation*}
	\ObjFun(\UpLevVars)-\ObjFun(\UpLevVars^*) \geq L \norm{\UpLevVars-\UpLevVars^*}^2
	\end{equation*}
\end{corollary}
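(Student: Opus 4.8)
The plan is to transfer the statement to a finite nonlinear program by means of the Reduction Ansatz and then to invoke the finite-dimensional theory of strong stability. Since the Reduction Ansatz holds at $\UpLevVars^*$, the local description \eqref{eq:RedAnsatz} shows that on a small ball $\unitball{\delta}{\UpLevVars^*}$ the feasible set $\FeasSet$ coincides with the feasible set of the finite program
\[
\min_{\UpLevVars}\ \ObjFun(\UpLevVars)\quad\text{s.t.}\quad G_{\UpLevIneqIndex,\LowLevMultIndex}(\UpLevVars):=\UpLevIneqSingle\big(\UpLevVars,\LowLevMult(\UpLevVars)\big)\le 0,\quad \UpLevIneqIndexForAll,\ 1\le\LowLevMultIndex\le\LowLevMultNum,
\]
whose constraint functions are twice continuously differentiable (as recorded after \eqref{eq:RedAnsatz}), are all active at $\UpLevVars^*$, and satisfy $DG_{\UpLevIneqIndex,\LowLevMultIndex}(\UpLevVars^*)=D_1\UpLevIneqSingle(\UpLevVars^*,\LowLevMult(\UpLevVars^*))$. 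Under this identification, \EMFCQ\ at $\UpLevVars^*$ is precisely \MFCQ\ for the reduced program, the failure of \ELICQ\ is the failure of \LICQ, stationarity of $\UpLevVars^*$ for \SIP\ is the KKT property, and strong stability of $\UpLevVars^*$ for \SIP\ corresponds to strong stability of $\UpLevVars^*$ as a KKT point of the reduced program. This last correspondence is the delicate part of the reduction; it is one of the equivalences behind the characterizations of Rückmann \cite{Ruckmann.1999}, and it is here that the assumption that $\UpLevVars^*$ is strongly stable --- the situation studied throughout this section --- enters.

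It then remains to establish the purely finite-dimensional fact: a strongly stable KKT point at which \MFCQ\ holds but \LICQ\ fails is a strict local minimizer of order two. I would derive this from Kojima's matrix characterization of strong stability \cite{Kojima.1980}. At such a point the set of Lagrange multipliers is a nonempty compact polytope, and strong stability is equivalent to demanding, for every vertex $\UpLevIneqLag$ of this polytope and every index set $D$ running from the support of $\UpLevIneqLag$ up to the (here full) active set and such that the active constraint gradients $DG_{\UpLevIneqIndex,\LowLevMultIndex}(\UpLevVars^*)$, $(\UpLevIneqIndex,\LowLevMultIndex)\in D$, are linearly independent, that the bordered matrix built from the Hessian in $\UpLevVars$ of the reduced Lagrangian and the Jacobian of this selection of gradients be nonsingular with determinant of sign $(-1)^{|D|}$. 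A Schur-complement computation turns each of these sign conditions into a definiteness/parity statement about the reduced Hessian on the associated critical subspace; since \LICQ\ fails, they must be checked for several genuinely different selections $D$ --- notably for proper subsets of the active set, including the empty selection when some multipliers vanish --- and matching the prescribed signs across all of them forces the second-order behaviour to be strong enough that $\ObjFun(\UpLevVars)-\ObjFun(\UpLevVars^*)\ge L\norm{\UpLevVars-\UpLevVars^*}^2$ holds along every feasible sequence $\UpLevVars\to\UpLevVars^*$ --- the last implication being the familiar contradiction argument that linearizes the active constraints and uses \MFCQ\ to pull a candidate sequence back into $\FeasSet$. Pushing this estimate through the reduction of the first paragraph yields the corollary.

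The main obstacle is the analysis of the second paragraph: carrying out the bordered-Hessian/Schur-complement bookkeeping in the degenerate case where \LICQ\ fails, that is, showing that Kojima's determinant-sign conditions, taken together over all admissible selections $D$ and all vertices of the multiplier polytope, genuinely enforce order-two growth and not merely a weaker second-order condition. A more economical route --- presumably the one used in the appendix --- is to sidestep this computation altogether and invoke the explicit form of Rückmann's characterization in \cite{Ruckmann.1999}, in which a strongly stable stationary point satisfying the Reduction Ansatz and \EMFCQ\ is classified as either satisfying \ELICQ\ (together with Kojima-type sign conditions) or being a strict local minimizer of order two; since \ELICQ\ fails by hypothesis, only the second alternative remains.
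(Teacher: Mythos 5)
Your concluding guess at the ``economical route'' is indeed the strategy of the paper, but neither of your two routes actually closes the argument. The paper's proof introduces the reduced Lagrangian $L(\UpLevVars,\UpLevIneqLag)=\ObjFun(\UpLevVars)+\sum_{\UpLevIneqIndex}\sum_{\LowLevMultIndex}\UpLevIneqLagSingle_{\UpLevIneqIndex,\LowLevMultIndex}\,\UpLevIneqSingle\big(\UpLevVars,\LowLevVars^{\UpLevIneqIndex,\LowLevMultIndex}(\UpLevVars)\big)$ built from the Reduction Ansatz and quotes R\"uckmann's characterization for the case where \EMFCQ\ holds and \ELICQ\ fails: strong stability is \emph{equivalent} to the requirement that for \emph{every} multiplier vector $\UpLevIneqLag\geq 0$ with $D_1L(\UpLevVars^*,\UpLevIneqLag)=0$ one has $\SOSCDir^{\top}D_1^2L(\UpLevVars^*,\UpLevIneqLag)\SOSCDir>0$ for all nonzero $\SOSCDir$ in the critical subspace $R(\UpLevVars^*)$. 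It then observes that this uniform positive-definiteness is stronger than the second-order sufficient condition of Theorem 5 in \cite{Lopez.2007}, and it is \emph{that} theorem which delivers the quantitative estimate $\ObjFun(\UpLevVars)-\ObjFun(\UpLevVars^*)\geq L\norm{\UpLevVars-\UpLevVars^*}^2$. Your final sentence instead attributes to \cite{Ruckmann.1999} a dichotomy (``either \ELICQ\ with sign conditions, or a strict local minimizer of order two'') that is not the form of his result; phrased that way you are assuming the conclusion, because converting the second-order characterization into order-two growth is precisely the step your write-up never supplies.

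The same missing step is what you yourself flag as ``the main obstacle'' in your primary route: showing that Kojima's bordered-determinant sign conditions, taken over all admissible selections $D$ and all vertices of the multiplier polytope, enforce order-two growth when \LICQ\ fails. That is essentially the mathematical content of the corollary once the reduction is made, and it is left uncarried-out, so as written the proposal replaces the statement by an equivalent unproven claim rather than proving it. Two smaller points: the identification ``strong stability for \SIP\ equals strong stability of the reduced finite program'' is itself delicate, since the perturbation class in Definition \ref{def:SIPStrongStab} perturbs $\ObjFun$ and $\UpLevIneq$ on a neighbourhood times all of $\Indexset$ rather than the finitely many reduced constraints --- you rightly defer this to R\"uckmann, and the paper avoids the issue by invoking his semi-infinite characterization directly; and you correctly read strong stability of $\UpLevVars^*$ into the hypotheses, which the corollary's wording omits but which its proof and its use in the Quadratic Convergence Theorem require.
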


\subsection{General regularity assumption and proof of quadratic rate of convergence}\label{subsec:Quad}

 We have already shown in Section \ref{sec:Stationary} that the limit of stationary points is again a stationary point. We therefore assume this in the following and show that the iterates converge with a quadratic rate. The main statement of this section is the Quadratic Convergence Theorem. We begin by strengthening our previous assumptions. To prove the quadratic rate of convergence, we assume the following:

\begin{assumptionGRA}
	Let the describing functions $\UpLevIneq$ and $\LowLevIneq$ be three times continuously differentiable and the objective $\ObjFun$ be twice continuously differentiable. Let $\SeqUpLev$ be constructed according to Algorithm \algname. Assume that, for every $\ItIndex\in \NNumbers$, the point $\ItUpLev$ is a stationary-point of $\LinSIP[\ItIndex-1]{\ItIndexset}$.  Assume there is a strongly stable stationary point $\LimUpLev \in \FeasSet$ with 
	\begin{equation*}
	\lim_{\ItIndex \rightarrow \infty} \ItUpLev =\LimUpLev \Punkt
	\end{equation*}
	Moreover assume that $\LimUpLev$ has the following properties:
	
	\begin{itemize}
		\item The constraint qualification \EMFCQ\ is satisfied.
		\item The Reduction Ansatz holds at $\LimUpLev$.
		\item For every $\UpLevIneqIndexForAll$, the global solution $\LimLowLev$ of the lower-level problem $\LowLevProblem{\UpLevIneqIndex}{\LimUpLev}$ is unique and
		\begin{equation}
		\UpLevIneqSingle(\LimUpLev,\LimLowLev) =0 \Punkt \label{eq:QuadConvNotNeededAssumption}
		\end{equation}
	\end{itemize}
	We finally assume that $\DiffIndices = \UpLevIneqIndexset$, for every $\ItIndex\in \NNumbers$.
\end{assumptionGRA}
The last assumption and Equation \eqref{eq:QuadConvNotNeededAssumption} are, similar to Assumption \ref{ass:KKTPoint}, only added to avoid case distinctions in the proof. While all remaining assumptions are basic regularity assumptions, the assumption of a unique solution of the lower-level problem is more restrictive. If there are, for one semi-infinite constraint, multiple active indices no quadratic rate can be expected for Algorithm \algname. In this case one has to modify the algorithm and determine in step \ref{Step:ModNextDisc} all local solutions instead of a single global solution. Then again a quadratic rate of convergence can be expected. 

In this section we proof the following statement showing a quadratic rate of convergence:
\begin{theoremQuad}
	Let the General Regularity Assumptions be satisfied. There is a constant $L$ such that, for sufficiently large $\ItIndex$, the following holds:
	\begin{equation*}
	\norm{\ItUpLev[\ItIndex+1]-\LimUpLev} \leq L \cdot \norm{\ItUpLev-\LimUpLev}^2 \Punkt
	\end{equation*}
\end{theoremQuad}

\noindent \textbf{\textit{Proof.} }As $\ItUpLev$ is a stationary point, we know that there are multipliers $\ItUpLevIneqLag\geq 0$ and $\ItLinConstrLag\geq 0$ such that
\begin{equation}\label{eq:QuadKKTOrig}
0= D \ObjFun(\ItUpLev[\ItIndex])+\sum_{\UpLevIneqIndexForAll}\sum_{\LowLevVars\in \ItIndexsetActive[\ItIndex]}\ItUpLevIneqLagSingle[\ItIndex]_{\UpLevIneqIndex,\LowLevVars} D_1 \UpLevIneqSingle(\ItUpLev[\ItIndex],\LowLevVars)+\sum_{\UpLevIneqIndexForAll}\ItLinConstrLagSingle[\ItIndex] D \ItLinConstr[\ItIndex-1]{\ItUpLev[\ItIndex]} \Punkt
\end{equation}
The main idea of the proof is to replace, for $\UpLevIneqIndexForAll$, every derivative of the constraints by $D_1 \UpLevIneqSingle(\ItUpLev[\ItIndex],\ItLowLev[\ItIndex])$. Equation \eqref{eq:QuadKKTOrig} will not hold with an equality anymore, but let for $\ItIndex\in \NNumbers$ and $\UpLevIneqIndexForAll$:
\begin{align}
\boldsymbol{\beta}^{\ItIndex}:=&\ D \ObjFun (\ItUpLev) + \sum_{\UpLevIneqIndexForAll} \ItConstrLagSingle[\ItIndex]_{\UpLevIneqIndex} D_1 \UpLevIneqSingle(\ItUpLev,\ItLowLev) \label{eq:DefDelta} \Komma\\
\alpha^{\ItIndex}_{\UpLevIneqIndex} :=&
\begin{cases}
\max\{0,\UpLevIneqSingle(\ItUpLev,\ItLowLev)\} 
& \text{ if } \ItConstrLagSingle_{\UpLevIneqIndex}=0 \Komma 
\\
\UpLevIneqSingle(\ItUpLev,\ItLowLev)& \text{ otherwise } \Komma
\end{cases}  \label{eq:DefEpsilon}
\end{align}
where we let for $\UpLevIneqIndexForAll$, as in Equation \eqref{eq:DefNewLag} in the previous section:
\begin{equation*}
\ItConstrLagSingle_{\UpLevIneqIndex} :=\ItLinConstrLagSingle+ \sum_{\LowLevVars\in \ItIndexsetActive}\ItUpLevIneqLagSingle_{\UpLevIneqIndex,\LowLevVars} \Punkt
\end{equation*}
With this definition the current iterate $\ItUpLev$ is a stationary point of the following permuted semi-infinite problem:
\begin{align*}
\SIP^{\ItIndex}_{\text{mod}}:\min_{\UpLevVars \in \UpLevSpace} \quad & \ObjFun(\UpLevVars) -\boldsymbol{\beta}^{\ItIndex+1}\cdot\UpLevVars\\
s.t.\quad & \UpLevIneqSingle(\UpLevVars,\LowLevVars) -\alpha^{\ItIndex+1}_{\UpLevIneqIndex} \leq 0 \text{ for all }\UpLevIneqIndexForAll, \LowLevVars\in \Indexset \Punkt\\
\end{align*}
The permutation can be controlled by the parameters $\boldsymbol{\alpha}^{\ItIndex}$ and $\boldsymbol{\beta}^{\ItIndex}$. In the next two lemmas we give bounds which will be needed to bound these parameters. Their proof can be found in Appendix \ref{app:Quad}.

\begin{lemma} \label{lem:ModQuadBoundsSpecial}
	Let the General Regularity Assumptions be satisfied.
	\begin{itemize}
		\item[i)] There is a $\ConstantOne\in \RNumbers$ such that, for sufficiently large $\ItIndex$ and $\UpLevIneqIndexForAll$, the following holds:
		\begin{equation*}
		|\UpLevIneqSingle(\ItUpLev,\ItLowLev)-\ItLinConstr[\ItIndex-1]{\ItUpLev}| \leq \ConstantOne\cdot \norm{\ItUpLev[\ItIndex]-\ItUpLev[\ItIndex-1]}^4 \Punkt
		\end{equation*}
		\item[ii)] There is a $\ConstantTwo\in \RNumbers$ such that, for sufficiently large $\ItIndex$ and $\UpLevIneqIndexForAll$, the following holds:
		\begin{equation*}
		\norm{D_1 \UpLevIneqSingle(\ItUpLev,\ItLowLev)-D\ItLinConstr[\ItIndex-1]{\ItUpLev}}\leq \ConstantTwo \norm{\ItUpLev-\ItUpLev[\ItIndex-1]}^2 \Punkt
		\end{equation*}
	\end{itemize}
\end{lemma}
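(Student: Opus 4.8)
The plan is to reduce both bounds to a single comparison of the lower-level Lagrangian evaluated along the \emph{true} reduction curve and along its \emph{linear model}. Fix $\UpLevIneqIndexForAll$ and set $\Delta_{\ItIndex} := \norm{\ItUpLev-\ItUpLev[\ItIndex-1]}$. Since $\UpLevIneq$ and $\LowLevIneq$ are three times continuously differentiable, the Reduction Ansatz supplies $C^2$ functions $\LowLevVars^{\UpLevIneqIndex}(\cdot)$ and $\LowLevIneqLag(\cdot)$ describing the locally unique stationary point and its multipliers, and for $\ItIndex$ large the data computed in Step~\ref{Step:ModNextDisc} coincide with them, $\ItLowLev=\LowLevVars^{\UpLevIneqIndex}(\ItUpLev)$ and $\ItLowLevIneqLag=\LowLevIneqLag(\ItUpLev)$, by uniqueness of the global lower-level solution at $\LimUpLev$. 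I introduce the exact reduced constraint $\phi_{\UpLevIneqIndex}(\UpLevVars):=\LowLevLag_{\UpLevIneqIndex}(\UpLevVars,\LowLevVars^{\UpLevIneqIndex}(\UpLevVars),\LowLevIneqLag(\UpLevVars))$; complementary slackness \eqref{eq:KKT2} gives $\phi_{\UpLevIneqIndex}(\UpLevVars)=\UpLevIneqSingle(\UpLevVars,\LowLevVars^{\UpLevIneqIndex}(\UpLevVars))$, so $\phi_{\UpLevIneqIndex}(\ItUpLev)=\UpLevIneqSingle(\ItUpLev,\ItLowLev)$, and the reduction-ansatz chain rule yields $D\phi_{\UpLevIneqIndex}(\ItUpLev)=D_1\UpLevIneqSingle(\ItUpLev,\ItLowLev)$. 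Recalling that $\ItLinLowLev[\ItIndex-1]$ and $\ItLinLowLevIneqLag[\ItIndex-1]$ are exactly the first-order Taylor polynomials of $\LowLevVars^{\UpLevIneqIndex}$ and $\LowLevIneqLag$ about $\ItUpLev[\ItIndex-1]$, the left-hand sides of i) and ii) are the value and the gradient at $\ItUpLev$ of
\begin{equation*}
\ItLinConstr[\ItIndex-1]{\UpLevVars}-\phi_{\UpLevIneqIndex}(\UpLevVars)
=\LowLevLag_{\UpLevIneqIndex}\big(\UpLevVars,\ItLinLowLev[\ItIndex-1](\UpLevVars),\ItLinLowLevIneqLag[\ItIndex-1](\UpLevVars)\big)
-\LowLevLag_{\UpLevIneqIndex}\big(\UpLevVars,\LowLevVars^{\UpLevIneqIndex}(\UpLevVars),\LowLevIneqLag(\UpLevVars)\big)\Punkt
\end{equation*}
Everything then hinges on how $\LowLevLag_{\UpLevIneqIndex}$ reacts to replacing the true primal--dual pair by its linear model.

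Two ingredients drive the estimate. First, a Taylor remainder: as $\LowLevVars^{\UpLevIneqIndex},\LowLevIneqLag\in C^2$ with derivatives bounded on a fixed compact neighborhood of $\LimUpLev$ into which the iterates eventually fall, writing $\overline{\boldsymbol y}:=\ItLinLowLev[\ItIndex-1](\ItUpLev)$ and $\overline{\boldsymbol\mu}:=\ItLinLowLevIneqLag[\ItIndex-1](\ItUpLev)$ for the model pair, there is a constant $c$ with $\norm{\overline{\boldsymbol y}-\LowLevVars^{\UpLevIneqIndex}(\ItUpLev)}+\norm{\overline{\boldsymbol\mu}-\LowLevIneqLag(\ItUpLev)}\le c\,\Delta_{\ItIndex}^2$. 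Second, a vanishing-first-order-sensitivity fact: the joint $(\LowLevVars,\boldsymbol\mu)$-gradient of $\LowLevLag_{\UpLevIneqIndex}(\ItUpLev,\cdot,\cdot)$ at the true pair $(\LowLevVars^{\UpLevIneqIndex}(\ItUpLev),\LowLevIneqLag(\ItUpLev))=(\ItLowLev,\ItLowLevIneqLag)$ is annihilated along every displacement the model can produce. Indeed, stationarity \eqref{eq:KKT1} gives $D_2\LowLevLag_{\UpLevIneqIndex}=0$ there, killing the primal block; the multiplier derivative of $\LowLevLag_{\UpLevIneqIndex}$ equals $-\LowLevIneqSingle(\LowLevVars)$, which vanishes for active $\LowLevIneqIndex\in\LowLevActiveI{\LimLowLev}$ by complementary slackness, while for inactive $\LowLevIneqIndex$ strict complementary slackness forces $\LowLevIneqLagSingle(\cdot)\equiv 0$ locally, so $D\LowLevIneqLagSingle(\ItUpLev[\ItIndex-1])=0$ and the inactive block of $\overline{\boldsymbol\mu}-\LowLevIneqLag(\ItUpLev)$ is identically zero.

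For i) I would Taylor-expand $\LowLevLag_{\UpLevIneqIndex}(\ItUpLev,\cdot,\cdot)$ in its primal--dual arguments about the true pair. The linear term is the joint gradient paired with the model displacement, and it vanishes termwise by the second ingredient (primal part by $D_2\LowLevLag_{\UpLevIneqIndex}=0$, active multipliers by $\LowLevIneqSingle=0$, inactive multipliers because their displacement is zero). Only the quadratic remainder survives, so with $C_2$ an upper bound for the joint Hessian on the neighborhood,
\begin{equation*}
\big|\ItLinConstr[\ItIndex-1]{\ItUpLev}-\UpLevIneqSingle(\ItUpLev,\ItLowLev)\big|
\le \tfrac{1}{2}\,C_2\,\big(c\,\Delta_{\ItIndex}^2\big)^2
=\ConstantOne\,\norm{\ItUpLev-\ItUpLev[\ItIndex-1]}^4\Punkt
\end{equation*}

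For ii) I would differentiate the identity of the first paragraph and evaluate at $\ItUpLev$. Writing out the total derivative of $\ItLinConstr[\ItIndex-1]{\cdot}$ (the model Jacobians being the constants $D\LowLevVars^{\UpLevIneqIndex}(\ItUpLev[\ItIndex-1])$ and $D\LowLevIneqLag(\ItUpLev[\ItIndex-1])$) and subtracting $D\phi_{\UpLevIneqIndex}(\ItUpLev)=D_1\UpLevIneqSingle(\ItUpLev,\ItLowLev)$ gives
\begin{align*}
D\ItLinConstr[\ItIndex-1]{\ItUpLev}-D_1\UpLevIneqSingle(\ItUpLev,\ItLowLev)
={}&\big(D_1\UpLevIneqSingle(\ItUpLev,\overline{\boldsymbol y})-D_1\UpLevIneqSingle(\ItUpLev,\ItLowLev)\big)\\
&{}+D_2\LowLevLag_{\UpLevIneqIndex}(\ItUpLev,\overline{\boldsymbol y},\overline{\boldsymbol\mu})\,D\LowLevVars^{\UpLevIneqIndex}(\ItUpLev[\ItIndex-1])\\
&{}-\sum_{\LowLevIneqIndexForAll}\LowLevIneqSingle(\overline{\boldsymbol y})\,D\LowLevIneqLagSingle(\ItUpLev[\ItIndex-1])\Punkt
\end{align*}
The first bracket is $O(\Delta_{\ItIndex}^2)$ by Lipschitz continuity of $D_1\UpLevIneqSingle$ in its second argument together with the Taylor remainder; the middle term is $O(\Delta_{\ItIndex}^2)$ because $D_2\LowLevLag_{\UpLevIneqIndex}$ vanishes at the true pair $(\ItLowLev,\ItLowLevIneqLag)$, so it is controlled by $c\,\Delta_{\ItIndex}^2$ times the bounded $D\LowLevVars^{\UpLevIneqIndex}(\ItUpLev[\ItIndex-1])$; and in the last sum the inactive $\LowLevIneqIndex$ drop out (as $D\LowLevIneqLagSingle(\ItUpLev[\ItIndex-1])=0$) while for active $\LowLevIneqIndex$ one has $\LowLevIneqSingle(\overline{\boldsymbol y})=\LowLevIneqSingle(\overline{\boldsymbol y})-\LowLevIneqSingle(\ItLowLev)=O(\Delta_{\ItIndex}^2)$ since $\LowLevIneqSingle(\ItLowLev)=0$. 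Summing yields ii) with $\ConstantTwo$ the sum of the three constants. The asymmetry between the fourth and second powers is exactly this: in the value the entire first-order sensitivity is annihilated, whereas differentiation reintroduces a genuine first-order dependence on $\LowLevVars$ through $D_1\UpLevIneqSingle$ (the first bracket), which sees the $O(\Delta_{\ItIndex}^2)$ model error only linearly. The main obstacle I expect is precisely the clean identification of this envelope-type second-order insensitivity and the active/inactive bookkeeping it requires; the remaining work --- making all constants uniform in $\ItIndex$ on a compact neighborhood and justifying $\ItLowLev=\LowLevVars^{\UpLevIneqIndex}(\ItUpLev)$, $\ItLowLevIneqLag=\LowLevIneqLag(\ItUpLev)$ from uniqueness of the global lower-level solution --- is routine.
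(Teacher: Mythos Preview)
Your proposal is correct and follows essentially the same approach as the paper's proof: both establish the $O(\Delta_k^2)$ Taylor remainder for the linearized primal--dual pair, then exploit that the $(\LowLevVars,\boldsymbol\mu)$-gradient of $\LowLevLag_{\UpLevIneqIndex}(\ItUpLev,\cdot,\cdot)$ is annihilated along the model displacement (via stationarity for the primal block and strict complementary slackness for the dual block), so that only the quadratic remainder survives in i) and only first-order-in-the-$O(\Delta_k^2)$-error terms survive in ii). Your write-up makes the active/inactive bookkeeping and the envelope-type structure more explicit than the paper does, but the argument is the same.
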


If the only active constraints in the stationary conditions \eqref{eq:QuadKKTOrig} are the additional constraints $\ItLinConstr[\ItIndex-1]{\ItUpLev}$, then the inequalities derived in the above lemma would already suffice to bound the parameter $\boldsymbol{\beta}^{\ItIndex}$. But the points in $\ItIndexset$ are needed to guarantee convergence to a feasible point and we cannot exclude the possibility that some of the constraints 
\begin{equation*}
\UpLevIneqSingle(\ItUpLev,\LowLevVars) \leq 0 \text{ for all } \UpLevIneqIndexForAll, \LowLevVars\in \ItIndexset
\end{equation*}
are active. The following lemma investigates these constraints

\begin{lemma}\label{lem:QuadConvAcitveDiscBound}
	Let the General Regularity Assumptions be satisfied. There is a $\ConstantKThree$ such that, for sufficiently large $\ItIndex$, $\UpLevIneqIndexForAll$ and a $\LowLevVars \in \Indexset$ with $\UpLevIneqSingle(\ItUpLev,\LowLevVars) =0$, the following holds:
	\begin{equation*}
	\|\ItLowLev-\LowLevVars\| \leq \ConstantKThree\|\ItUpLev-\ItUpLev[\ItIndex-1]\|^2 \Komma
	\end{equation*}
\end{lemma}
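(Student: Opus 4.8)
The plan is to exploit the fact that $\LowLevVars \in \Indexset$ with $\UpLevIneqSingle(\ItUpLev,\LowLevVars)=0$ is a \emph{global} maximizer of the $\UpLevIneqIndex$-th lower-level problem at the point $\ItUpLev$ with optimal value $0$, while $\ItLowLev[\ItIndex-1]$ is the global maximizer at $\ItUpLev[\ItIndex-1]$, and the feasibility constraint $\ItLinConstr[\ItIndex-1]{\ItUpLev}\le 0$ (which holds since $\ItUpLev$ solves $\LinSIP[\ItIndex-1]{\ItIndexset}$) forces $\LowLevLag_{\UpLevIneqIndex}(\ItUpLev,\ItLinLowLev[\ItIndex-1](\ItUpLev),\ItLinLowLevIneqLag[\ItIndex-1](\ItUpLev))\le 0$. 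The key observation is that, under the General Regularity Assumptions, the lower-level problem $\LowLevProblem{\UpLevIneqIndex}{\LimUpLev}$ satisfies \LICQ, strict complementary slackness and the second-order \emph{sufficient} condition at its unique solution $\LimLowLev$; by continuity these hold in a neighborhood, so for large $\ItIndex$ the solution $\ItLowLev[\ItIndex-1]$ is a nondegenerate local maximizer and the Reduction Ansatz machinery of Subsection \ref{subsec:Reduction} applies uniformly. In particular the second-order growth of $\UpLevIneqSingle(\ItUpLev,\cdot)$ away from its maximizer on $\Indexset$ is bounded below by a uniform positive constant, i.e. there is $c>0$ with $\UpLevIneqSingle(\ItUpLev,\LowLevVars)\le \varphi_{\UpLevIneqIndex}(\ItUpLev)-c\norm{\LowLevVars-\bar{\LowLevVars}^{\UpLevIneqIndex}(\ItUpLev)}^2$ for all $\LowLevVars\in\Indexset$ near the active set, where $\bar{\LowLevVars}^{\UpLevIneqIndex}(\ItUpLev)=\LowLevMult(\ItUpLev)$ is the reduction function.

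First I would establish this uniform quadratic growth estimate: since $\varphi_{\UpLevIneqIndex}(\ItUpLev)=\UpLevIneqSingle(\ItUpLev,\bar\LowLevVars^{\UpLevIneqIndex}(\ItUpLev))=\max_{\LowLevVars\in\Indexset}\UpLevIneqSingle(\ItUpLev,\LowLevVars)$ and $\UpLevIneqSingle(\ItUpLev,\LowLevVars)=0$, this gives $0\le \varphi_{\UpLevIneqIndex}(\ItUpLev)$ and $c\norm{\LowLevVars-\bar\LowLevVars^{\UpLevIneqIndex}(\ItUpLev)}^2\le \varphi_{\UpLevIneqIndex}(\ItUpLev)$. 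Next I would bound $\varphi_{\UpLevIneqIndex}(\ItUpLev)$ from above. Using that $\bar\LowLevVars^{\UpLevIneqIndex}(\ItUpLev)$ is the stationary point at $\ItUpLev$ and $\ItLowLev[\ItIndex-1]$ the stationary point at $\ItUpLev[\ItIndex-1]$, the differentiability of the reduction function gives $\norm{\bar\LowLevVars^{\UpLevIneqIndex}(\ItUpLev)-\ItLowLev[\ItIndex-1]}=O(\norm{\ItUpLev-\ItUpLev[\ItIndex-1]})$, and a Taylor expansion of $\UpLevIneqSingle(\cdot,\bar\LowLevVars^{\UpLevIneqIndex}(\cdot))$ together with \eqref{eq:RedAnsatz}--\eqref{eq:DerivLinConstr} gives $\varphi_{\UpLevIneqIndex}(\ItUpLev)=\UpLevIneqSingle(\ItUpLev[\ItIndex-1],\ItLowLev[\ItIndex-1])+D_1\UpLevIneqSingle(\ItUpLev[\ItIndex-1],\ItLowLev[\ItIndex-1])(\ItUpLev-\ItUpLev[\ItIndex-1])+O(\norm{\ItUpLev-\ItUpLev[\ItIndex-1]}^2)$, which by the definition of $\ItLinConstr[\ItIndex-1]{\cdot}$ and Lemma \ref{lem:ModQuadBoundsSpecial}(i) equals $\ItLinConstr[\ItIndex-1]{\ItUpLev}+O(\norm{\ItUpLev-\ItUpLev[\ItIndex-1]}^2)\le O(\norm{\ItUpLev-\ItUpLev[\ItIndex-1]}^2)$, where the last inequality uses $\ItLinConstr[\ItIndex-1]{\ItUpLev}\le 0$. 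Combining, $c\norm{\LowLevVars-\bar\LowLevVars^{\UpLevIneqIndex}(\ItUpLev)}^2\le O(\norm{\ItUpLev-\ItUpLev[\ItIndex-1]}^2)$, hence $\norm{\LowLevVars-\bar\LowLevVars^{\UpLevIneqIndex}(\ItUpLev)}=O(\norm{\ItUpLev-\ItUpLev[\ItIndex-1]})$. Finally, the triangle inequality with $\norm{\bar\LowLevVars^{\UpLevIneqIndex}(\ItUpLev)-\ItLowLev[\ItIndex-1]}=O(\norm{\ItUpLev-\ItUpLev[\ItIndex-1]})$ and, if needed, $\norm{\ItLowLev-\ItLowLev[\ItIndex-1]}=O(\norm{\ItUpLev-\ItUpLev[\ItIndex-1]})$ yields the claimed bound $\norm{\ItLowLev-\LowLevVars}\le\ConstantKThree\norm{\ItUpLev-\ItUpLev[\ItIndex-1]}^2$ — wait, this only gives a linear bound, so one must be more careful here.

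The subtle point, and what I expect to be the main obstacle, is that the naive estimate above only delivers $\norm{\ItLowLev-\LowLevVars}=O(\norm{\ItUpLev-\ItUpLev[\ItIndex-1]})$, whereas the lemma claims a \emph{quadratic} bound. The resolution must be that $\LowLevVars$ is not merely \emph{some} point in $\Indexset$ but one where $\UpLevIneqSingle(\ItUpLev,\LowLevVars)=0$ \emph{exactly}, together with the fact that the true optimal value $\varphi_{\UpLevIneqIndex}(\ItUpLev)$ is itself only $O(\norm{\ItUpLev-\ItUpLev[\ItIndex-1]}^4)$ above $0$ rather than $O(\norm{\ItUpLev-\ItUpLev[\ItIndex-1]}^2)$. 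This sharper estimate on $\varphi_{\UpLevIneqIndex}(\ItUpLev)$ should come from Lemma \ref{lem:ModQuadBoundsSpecial}(i), which gives $|\UpLevIneqSingle(\ItUpLev,\ItLowLev)-\ItLinConstr[\ItIndex-1]{\ItUpLev}|\le\ConstantOne\norm{\ItUpLev-\ItUpLev[\ItIndex-1]}^4$, combined with the fact that $\UpLevIneqSingle(\ItUpLev,\ItLowLev)=\varphi_{\UpLevIneqIndex}(\ItUpLev)$ (since $\ItLowLev$ is a global solution at $\ItUpLev$) and $\ItLinConstr[\ItIndex-1]{\ItUpLev}\le 0$; hence $0\le\varphi_{\UpLevIneqIndex}(\ItUpLev)\le\ConstantOne\norm{\ItUpLev-\ItUpLev[\ItIndex-1]}^4$. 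Plugging this into the quadratic growth inequality $c\norm{\LowLevVars-\bar\LowLevVars^{\UpLevIneqIndex}(\ItUpLev)}^2\le\varphi_{\UpLevIneqIndex}(\ItUpLev)$ gives $\norm{\LowLevVars-\bar\LowLevVars^{\UpLevIneqIndex}(\ItUpLev)}\le(\ConstantOne/c)^{1/2}\norm{\ItUpLev-\ItUpLev[\ItIndex-1]}^2$, and the same estimate applied with $\LowLevVars$ replaced by $\ItLowLev$ (also a point where $\UpLevIneqSingle(\ItUpLev,\cdot)$ attains its max, so exactly equal to $\bar\LowLevVars^{\UpLevIneqIndex}(\ItUpLev)$ by uniqueness, modulo the neighborhood in which the reduction function is defined) finishes the proof by the triangle inequality. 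The technical care needed is to ensure that $\LowLevVars$, $\ItLowLev$ and $\bar\LowLevVars^{\UpLevIneqIndex}(\ItUpLev)$ all lie in the single neighborhood $\unitball{\varepsilon}{\cdot}$ on which the Reduction Ansatz representation \eqref{eq:Red1}--\eqref{eq:RedAnsatz} is valid; this is where Lemma \ref{lem:ConvKKTBoundDisc} (or the uniqueness assumption \eqref{eq:QuadConvNotNeededAssumption} in the General Regularity Assumptions) is invoked to guarantee that active indices of the discretized problem cluster only at the unique limit solution $\LimLowLev$.
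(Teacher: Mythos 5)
Your final (self-corrected) argument is essentially the paper's proof: it combines (a) a uniform quadratic decay of $\UpLevIneqSingle(\ItUpLev,\cdot)$ on $\Indexset$ away from its global maximizer $\ItLowLev$ with (b) the estimate $0\leq\UpLevIneqSingle(\ItUpLev,\ItLowLev)\leq\ConstantOne\norm{\ItUpLev-\ItUpLev[\ItIndex-1]}^4$, which you obtain exactly as the paper does from Lemma \ref{lem:ModQuadBoundsSpecial}(i) together with the feasibility $\ItLinConstr[\ItIndex-1]{\ItUpLev}\leq 0$, and it disposes of active indices far from $\LimLowLev$ by the same compactness/clustering reasoning (your appeal to Lemma \ref{lem:ConvKKTBoundDisc}); the resulting chain $\ConstantK\norm{\ItLowLev-\LowLevVars}^2\leq\UpLevIneqSingle(\ItUpLev,\ItLowLev)-\UpLevIneqSingle(\ItUpLev,\LowLevVars)\leq\ConstantOne\norm{\ItUpLev-\ItUpLev[\ItIndex-1]}^4$ is precisely the paper's. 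The one place where you are thinner than the paper is ingredient (a): you assert that, because \LICQ, strict complementarity and the second-order condition hold at $\LimLowLev$, the ``Reduction Ansatz machinery applies uniformly'' and a growth constant $c>0$ independent of $\ItIndex$ exists. This is true but is the nontrivial half of the paper's proof: there it is established by invoking Kojima's strong stability of $\LimLowLev$ for the quadratically penalized lower-level problems $\max_{\LowLevVars\in\Indexset}\UpLevIneqSingle(\ItUpLev,\LowLevVars)+\ConstantL\norm{\ItLowLev-\LowLevVars}^2$, observing that $\ItLowLev$ remains a stationary point of these (the penalty has zero value and gradient there), and using a boundary argument on a fixed ball around $\LimLowLev$ to conclude $\ItLowLev$ is the maximizer, i.e.\ the uniform inequality $\UpLevIneqSingle(\ItUpLev,\LowLevVars)+\ConstantK\norm{\ItLowLev-\LowLevVars}^2\leq\UpLevIneqSingle(\ItUpLev,\ItLowLev)$. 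So your proposal is structurally correct and matches the paper's route; to be complete it would need this stability argument (or an equivalent uniform second-order growth lemma) spelled out rather than asserted, and the earlier digression producing only a linear bound is harmless since you discard it.
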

As a direct consequence of this lemma, there is a constant $\ConstantKFour$ such that, for sufficiently large $\ItIndex$ and $\LowLevVars \in \ItIndexsetActive$, the following holds:
\begin{equation}
\norm{D_1\UpLevIneqSingle(\ItUpLev,\ItLowLev)-D_1\UpLevIneqSingle(\ItUpLev,\LowLevVars)} \leq \ConstantKFour \norm{\ItUpLev- \ItUpLev[\ItIndex-1]}^{2} \Punkt \label{eq:QuadConvDerivDisc}
\end{equation}

\begin{proof}[Continuation of proof of Quadratic Convergence Theorem] We can now bound the parameters $\boldsymbol{\alpha}^{\ItIndex}$ and $\boldsymbol{\beta}^{\ItIndex}$. By Equations \eqref{eq:QuadKKTOrig}, \eqref{eq:DefDelta} and the definition of $\ItConstrLagSingle_{\UpLevIneqIndex}$ the following holds:
	\begin{align}
	\norm{\boldsymbol{\beta}^{\ItIndex}}=&\ \norm{ D \ObjFun (\ItUpLev) + \sum_{\UpLevIneqIndexForAll} \ItConstrLagSingle_{\UpLevIneqIndex}  D_1 \UpLevIneqSingle(\ItUpLev,\ItLowLev)}\nonumber\\
	=&\ \norm{ \sum_{\UpLevIneqIndexForAll} \overline{\lambda}_{\UpLevIneqIndex}^{\ItIndex} \big(D_1 \UpLevIneqSingle(\ItUpLev,\ItLowLev)-D \ItLinConstr[\ItIndex-1]{\ItUpLev}\big)\nonumber\\ &\ +\sum_{\UpLevIneqIndexForAll}\sum_{\LowLevVars \in \ItIndexsetActive}\lambda_{\LowLevVars}^{\UpLevIneqIndex,\ItIndex} \big(D_1 \UpLevIneqSingle(\ItUpLev,\ItLowLev)-D_1 \UpLevIneqSingle(\ItUpLev,\LowLevVars)\big)   }\Punkt\nonumber\\
	\intertext{By Lemma \ref{lem:ModQuadBoundsSpecial} and the inequality given in Equation \eqref{eq:QuadConvDerivDisc}, the following holds :}	
		\norm{\boldsymbol{\beta}^{\ItIndex}}\leq&\  \left(\sum_{\UpLevIneqIndexForAll} \overline{\lambda}_{\UpLevIneqIndex}^{\ItIndex}+\sum_{\UpLevIneqIndexForAll}\sum_{\LowLevVars \in \ItIndexsetActive[\ItIndex]}\lambda_{\LowLevVars}^{\UpLevIneqIndex,\ItIndex}
	\right) \max\{\ConstantKTwo,\ConstantFour\} \norm{\ItUpLev[\ItIndex]-\ItUpLev[\ItIndex-1]}^2\Punkt\nonumber\\
	\intertext{By Lemma \ref{lem:KKTBounded} the Lagrange multipliers are bounded and there is a $\ConstantKFife>0$ such that}
	\norm{\boldsymbol{\beta}^{\ItIndex}}\leq&\  \ConstantKFife \norm{\ItUpLev-\ItUpLev[\ItIndex-1]}^2 \Punkt \label{eq:QuadConvProofIneq2}
	\end{align}
	To bound the absolute value of $\alpha^{k}_i$, for $\UpLevIneqIndexForAll$,  we have to make a case distinction.
	
	First case: assume that $\UpLevIneqSingle(\ItUpLev,\ItLowLev) \geq 0$. As $\ItLinConstr[\ItIndex]{\ItUpLev[\ItIndex+1]}\leq 0$, we have by Lemma \ref{lem:ModQuadBoundsSpecial}:
	\begin{equation}
	|\alpha^{k}_i| \leq \ConstantKTwo \norm{\ItUpLev-\ItUpLev[\ItIndex-1]}^4 \Punkt \label{eq:QuadConvIneqAlhpa1}
	\end{equation}
	
	Second case: assume that $\UpLevIneqSingle(\ItUpLev,\ItLowLev) < 0$. If $\ItConstrLagSingle_{\UpLevIneqIndex}=0$, then it holds $\alpha^{k}_i = 0$, by definition (see Equation \eqref{eq:DefEpsilon}). If $\ItConstrLagSingle_{\UpLevIneqIndex}>0$, then we must have $\overline{\lambda}_{\UpLevIneqIndex}^k>0$, as no $\LowLevVars\in \ItIndexset$ can be active. We have by the complementarity conditions:
	\begin{equation*}
	\ItLinConstr[\ItIndex-1]{\ItUpLev}=0 \Punkt
	\end{equation*}
	Again, Lemma \ref{lem:ModQuadBoundsSpecial} shows the same inequality  as in Equation \eqref{eq:QuadConvIneqAlhpa1}. 
	
	To use the two corollaries about strong stability, we need to make a case distinction
	
	\textbf{First Case: \ELICQ\ is satisfied at $\LimUpLev$.}	
	By the strong stability of $\LimUpLev$, we know using Corollary \ref{cor:Strong1}  that there is a constant $L'$ such that, for sufficiently large $\ItIndex$:
	\begin{equation}
	\norm{\ItUpLev[\ItIndex+1] -\LimUpLev} \leq L' \cdot \big(\norm{\boldsymbol{\alpha}^{\ItIndex+1}}+\norm{\boldsymbol{\beta}^{\ItIndex+1}}\big)\Punkt \label{eq:QuadConvProofIneq1}
	\end{equation}
	
	Using the inequality given in Equation \eqref{eq:QuadConvProofIneq1} in combination with the inequalities given in Equations \eqref{eq:QuadConvProofIneq2} and \eqref{eq:QuadConvIneqAlhpa1}, we receive further, for sufficiently large $\ItIndex$:
	\begin{align}
	\norm{\ItUpLev[\ItIndex+1]-\LimUpLev} \leq & L'(\ConstantKFife+\ConstantKTwo) \norm{\ItUpLev[\ItIndex+1]-\ItUpLev}^2 \nonumber\\
	\leq&L'(\ConstantKFife+\ConstantKTwo) \Big(\norm{\ItUpLev[\ItIndex+1]-\LimUpLev}^2+2\norm{\ItUpLev[\ItIndex+1]-\LimUpLev}\cdot \norm{\ItUpLev-\LimUpLev} +\norm{\ItUpLev-\LimUpLev}^2\Big) \nonumber\\
	\leq & \frac{1}{2} \norm{\ItUpLev[\ItIndex+1]-\LimUpLev} + L'(\ConstantKFife+\ConstantKTwo) \norm{\ItUpLev-\LimUpLev}^2 \Komma\label{eq:ProofQuadConvQuadConv}
	\end{align}	
	where the last inequality holds for sufficiently large $\ItIndex$, as the first two terms converge faster to $0$ than $ \norm{\ItUpLev[\ItIndex+1]-\LimUpLev}$. The claim is shown for $L=2 L'(L_1+\ConstantKTwo)$.
	
	\textbf{Second case: $\ELICQ$ does not hold at $\LimUpLev$.} We first introduce some neighbourhoods of $\LimUpLev$.
	
	(1) As \ELICQ\ does not hold at $\LimUpLev$, but \EMFCQ\ does, we know from Corollary \ref{cor:Strong2} that the limit point $\LimUpLev$ is a local minimum and there is an $\varepsilon_1>0$ and an $\ConstantL_1>0$ such that, for all $\UpLevVars \in \FeasSet \cap \unitball{\varepsilon_1}{\LimUpLev}$, the following holds:
	\begin{equation}
	\ObjFun(\UpLevVars)-\ObjFun(\LimUpLev)\geq \ConstantL_1 \cdot \norm{\UpLevVars-\LimUpLev}^2\Punkt \label{eq:ProofQuadConvEMFCQSecOrd}
	\end{equation}
	
	(2) We denote the feasible set of problem $\SIP^k_{\text{mod}}$ by $M^k$. By the strong stability there is an $\varepsilon_2>0$ such that, for sufficiently large $\ItIndex$, there is an unique stationary point of $\SIP^{\ItIndex}_{\text{mod}}$ within
	\begin{equation*}
	M^k \cap \unitball{\varepsilon_2}{\LimUpLev} \Punkt
	\end{equation*} 
	
	(3) As \EMFCQ\ holds at $\LimUpLev$ for \SIP, there is an $\EMFCQVec$ such that, for every $\UpLevIneqIndexForAll$, we have:
	\begin{equation*}
	D_1\UpLevIneqSingle(\LimUpLev,\LimLowLev)\EMFCQVec\leq -1 \Punkt
	\end{equation*}
	We can choose an $\varepsilon_3>0$ and a $\delta>0$ such that, for all $\UpLevVars\in \unitball{\varepsilon_3}{\LimUpLev}$, $\UpLevIneqIndexForAll$ and $\LowLevVars\in \unitball{\delta}{\LimLowLev}$, the following holds:
	\begin{equation}
	D_1\UpLevIneqSingle(\UpLevVars,\LowLevVars)\EMFCQVec \leq -\frac{1}{2} \Punkt \label{eq:ProofQuadCaseEMFCQ1}
	\end{equation}
	
	(4) For every $\UpLevIneqIndexForAll$, the set $Y^{i,\delta}:=\Indexset\setminus\unitball{\delta}{\LimLowLev}$ is compact. The maximum $\max_{\LowLevVars \in Y^{i,\delta}} \UpLevIneqSingle(\UpLevVars)$ is attained for every $\UpLevIneqIndexForAll$. By continuity we can choose an $\varepsilon_4$ such that, for every $\UpLevVars\in \unitball{\varepsilon_4}{\LimUpLev}$, $\UpLevIneqIndexForAll$ and $\LowLevVars\in Y^{i,\delta}$, the following holds:
	\begin{equation}
	\UpLevIneqSingle(\UpLevVars,\LowLevVars) \leq 0 \Punkt\label{eq:ProofQuadCaseEMFCQ2}
	\end{equation}
	
	Let $\varepsilon:= \min\{\varepsilon_1,\varepsilon_2,\varepsilon_3,\varepsilon_4\}$. Denote by $\hat{\UpLevVars}^{\ItIndex} \in M^k\cap \unitballClosed{\varepsilon/2}{\LimUpLev}$ a point with:
	\begin{equation*}
	\ObjFun(\hat{\UpLevVars}^{\ItIndex})=\min_{\UpLevVars \in M^k\cap \unitballClosed{\varepsilon/2}{\LimUpLev}} \ObjFun(\UpLevVars) \Komma
	\end{equation*}
	where $\unitballClosed{\varepsilon/2}{\LimUpLev}$ denotes the closure of $\unitball{\varepsilon/2}{\LimUpLev}$.	We will see in the following that $\ItUpLev = \hat{\UpLevVars}^{\ItIndex}$. We first move the constructed point towards feasibility. As $\max_{\UpLevIneqIndexForAll} \alpha_{\UpLevIneqIndex}^{\ItIndex}$ converges to $0$, we have, for sufficiently large $k$:
	\begin{equation}
	2 \norm{\EMFCQVec} \max_{\UpLevIneqIndexForAll} \alpha^k_i < \varepsilon/2 \Punkt
	\end{equation}
	By \eqref{eq:ProofQuadCaseEMFCQ2} and using a first-order Taylor expansion together with \eqref{eq:ProofQuadCaseEMFCQ1}, the point:
	\begin{equation*}
	\FeasIt:= \hat{\UpLevVars}^k + 2 \EMFCQVec \max_{\UpLevIneqIndexForAll} \alpha^k_i
	\end{equation*}
	is, for sufficiently large $\ItIndex$, feasible for \SIP. By continuity and the inequality given in \eqref{eq:QuadConvIneqAlhpa1} there is an $L_2>0$ such that, for sufficiently large $\ItIndex$:
	\begin{equation*}
	\ObjFun(\FeasIt)-\ObjFun(\hat{\UpLevVars}^{\ItIndex}) \leq L_2  \norm{\ItUpLev-\ItUpLev[\ItIndex-1]}^4 \Punkt
	\end{equation*}
	As by construction $\ObjFun(\hat{\UpLevVars}^{\ItIndex})\leq \ObjFun(\LimUpLev)$ we obtain using \eqref{eq:ProofQuadConvEMFCQSecOrd}, for  sufficiently large $\ItIndex$:
	\begin{equation}
	\norm{\hat{\UpLevVars}^{\ItIndex}- \LimUpLev} \leq \sqrt{\frac{L_2}{L_1}} \norm{\ItUpLev-\ItUpLev[\ItIndex-1]}^2 \Punkt \label{eq:ProofQuadSecCaseQuadBound}
	\end{equation}
	This means that, for sufficiently large $\ItIndex$, the point $\hat{\UpLevVars}^{\ItIndex}$ is contained in $\unitball{\varepsilon/2}{\LimUpLev}$ and is a local minimum of $\SIP_{\text{mod}}^{\ItIndex}$. From \eqref{eq:ProofQuadCaseEMFCQ1}, it follows that $\EMFCQ$ holds for $\SIP_{\text{mod}}^{\ItIndex}$. This means that $\hat{\UpLevVars}^{\ItIndex}$ is a stationary point. By uniqueness we have:
	\begin{equation*}
	\hat{\UpLevVars}^{\ItIndex}= \ItUpLev \Punkt
	\end{equation*}
	Using Equation \eqref{eq:ProofQuadSecCaseQuadBound}, the quadratic convergence now follows completely analogous to \eqref{eq:ProofQuadConvQuadConv}.
\end{proof}

\section{Numerical example}\label{sec:Numeric}
In the previous two sections we have shown that the limit point is a stationary point of the semi-infinite problem and that the iterates converge with a quadratic rate. We now present an easy numerical example which shows that the number of iterations can be reduced with the new Algorithm. We therefore compare the algorithm by Blankenship and Falk and the new Algorithm \algname.
A detailed numerical comparison using many examples would exceed the scope of this paper. The main focus of this work are the theoretical convergence properties shown in the two previous sections. The following example should only give an impression on how the new algorithm can speed up the convergence.

All implementations were done in MATLAB\textsuperscript{\textregistered} \cite{MATLAB.2016b}. All finite nonlinear problems were solved using the SQP method provided by the  MATLAB\textsuperscript{\textregistered} function \texttt{fmincon} (part of MATLAB\textsuperscript{\textregistered} Optimization Toolbox\texttrademark). The standard settings were used. All derivatives were calculated analytically and provided to \texttt{fmincon} as well as to the calculation of the linearization.

As a numerical example we consider the embedding of an ellipse into a triangle. We use the following description of an ellipse:
\begin{equation*}
D(\UpLevVars):=\Big\{\LowLevVars\in \RNumbers^2 \mid \big(\LowLevVars-\boldsymbol{c}(\UpLevVars)\big)^{\top}\left(A(\UpLevVars)A(\UpLevVars)^{\top}\right)^{-1}\big(\LowLevVars-\boldsymbol{c}(\UpLevVars)\big)\leq 1\Big\}\Komma
\end{equation*}
where
\begin{equation*}
\boldsymbol{c}(\UpLevVars)=\begin{pmatrix} x_1\\x_2\end{pmatrix} \quad \text{and} \quad A(\UpLevVars)=\begin{pmatrix}
x_3 & x_5\\
0 & x_4
\end{pmatrix} \Punkt
\end{equation*}

As a container we consider the following triangle
\begin{equation*}
C:=\left\{\LowLevVars\in \RNumbers^{2} \left| \begin{aligned}
g_1(\LowLevVars):=&&-y_1-1 \leq&\ 0,\\ 
g_2(\LowLevVars):=&&-y_2-1 \leq&\ 0,\\
g_3(\LowLevVars):=&&\frac{1}{4} y_1 +y_2 -\frac{3}{4} \leq&\ 0 
\end{aligned}\right.\right\}\Punkt
\end{equation*}
 
The design-centering problem consists of the maximization of of the volume $\mathrm{vol}(D(\UpLevVars))$ under the condition:
\begin{equation*}
D(\UpLevVars) \subseteq C
\end{equation*}
More details about design-centering problems can for example be found in \cite{Stein.2003} and \cite{Harwood.2017}. In this easy case the problem can be solved analytically and its solution is given by:
\begin{equation*}
\LimUpLev=\left(\frac{5}{3},-\frac{1}{3},\frac{4\sqrt{3}}{3},\frac{2}{3},-\frac{4}{3}\right)^{\top} \Punkt
\end{equation*}
The optimal solution is shown in Figure \ref{fig:Example1}.
\begin{figure}[H]
	\centering
	\includegraphics[width=0.6\textwidth]{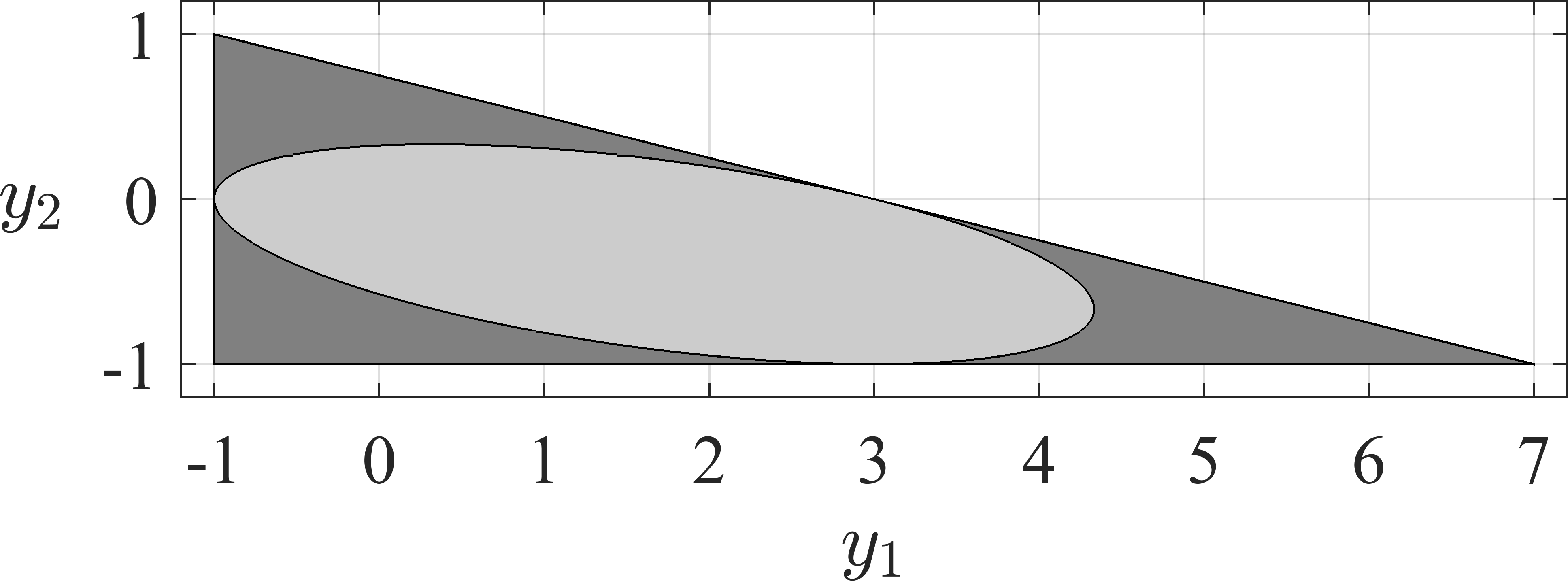}
	\caption{Largest ellipse in triangle $C$ [\emph{light gray} - ellipse $D(\LimUpLev)$, \emph{dark gray} - triangle $C$].}
	\label{fig:Example1}
\end{figure}
By letting $Y:=\{\LowLevVars \in \LowLevSpace \mid \LowLevVarsSingle_1^2+\LowLevVarsSingle_1^2 \leq 1\}$ and using $\TraffoFun(\UpLevVars,\LowLevVars)=A(\UpLevVars) \LowLevVars +\boldsymbol{c}(\UpLevVars)$ as a parametrization of the ellipse, the design-centering problem can be equivalently described by:
\begin{align*}
\SIP_{DC}: \max_{\UpLevVars\in \UpLevSpace}\quad & \pi \cdot x_3 \cdot x_4 \\
\text{ s.t.} \quad& \UpLevIneqSingle(\TraffoFun(\UpLevVars,\LowLevVars))\leq 0 \text{ for all } \UpLevIneqIndex\in \{1,2,3\}, \LowLevVars \in \Indexset \Punkt
\end{align*}
As initial point we used $\ItUpLev[1]=(0,0,1,1,0)^{\top}$. We terminated both algorithms the first time the distance to the optimal solution did fall below $10^{-4}$.

\def\algGSIP{\mathcal{A}_{\GSIP}}
\def\algSIP{\mathcal{A}_{1}}
\def\algBF{\mathcal{A}_{B\&F}}
The benefit of having access to the analytic solution is, that we can calculate the distance of the current iterates to a limit point. This distance for the first iterates is stated in Table \ref{tab:Example1}. The distance of iterates to the analytic solution are given in table \ref{tab:Example1}. While the distance is similar for the first iteration in further iterations the new algorithm \algname\ clearly outperforms the Blankenship and Falk algorithm ($\algBF$) for the iterations $2$ and $3$. In Iteration $3$ the termination criterion is reached for Algorithm $\algSIP$ which took $0.74$ seconds. The algorithm of Blankenship and Falk terminates after $14$ iteration which took $2.06$ seconds. 

\begin{table}[H]
	\centering
	\caption{Distances of the iterates to the optimal solution, $\norm{\ItUpLev-\LimUpLev}$, for the new algorithm with linear information (Algorithm \algname) and the original Blankenship and Falk algorithm ($\algBF$).}
	\begin{tabular}{cll}
		\toprule
		Iteration&$\algBF$ & \algname\\
		\midrule
		$0$&$2.5480$&$2.5480$\\
		$1$&$3.4975$&$2.156$\\
		$2$&$0.3374$&$3.9415\cdot 10^{-2}$\\
		$3$&$0.7586$&$1.0989\cdot 10^{-5}$\\
		$4$&$0.2543$&\multicolumn{1}{c}{-}\\
		$5$&$6.5553\cdot 10^{-2}$&\multicolumn{1}{c}{-}\\
		$6$&$2.3665\cdot 10^{-2}$&\multicolumn{1}{c}{-} \\
		\multicolumn{1}{c}{\vdots}&\multicolumn{1}{c}{\vdots} &\multicolumn{1}{c}{\vdots}\\
		$14$ &$4.9865\cdot 10^{-5}$ &\multicolumn{1}{c}{-}\\
		\bottomrule
	\end{tabular}
	\label{tab:Example1}
\end{table}
\section{Conclusion}
We started this paper with an example which shows that the classical discretization by Blankenship and Falk does in general not converge with a quadratic rate of convergence. This observation motivated a new adaptive discretization method. Using a linearization, we injected an additional constraint which accounts for the points not yet added to the discretization. We solve a new approximate problem on every iteration, having the constraints induced by the discretization points together with the injected constraint.

We showed two main convergence results. In Section \ref{sec:Stationary} we showed that iterates that are stationary points of the approximate problems, converge to a stationary point of the semi-infinite problem. Using strong stability of the limit point, we proved a quadratic rate of convergence under mild assumptions. In a numerical example we confirmed the quadratic rate and strongly reduced the number of iterations.

The smaller number of iterations also reduces the number of discretization points. As a result, the approximate problems have fewer constraints and are likely to be solved more quickly. Therefore, we are confident that the new algorithm will be faster than the classical adaptive discretization by Blankenship and Falk also in larger examples.

\bibliographystyle{abbrv}
\bibliography{LiteraturPaperQuadraticConvergence}

\begin{thebibliography}{10}

\bibitem{Bazaraa.2006}
M.~S. Bazaraa, H.~D. Sherali, and C.~M. Shetty.
\newblock {\em {Nonlinear programming: Theory and algorithms / Mokhtar S.
  Bazaraa, Hanif D. Sherali, C.M. Shetty}}.
\newblock Wiley-Interscience and {Chichester : John Wiley [distributor]},
  Hoboken, N.J., 3rd ed. edition, 2006.

\bibitem{Blankenship.1976}
J.~W. Blankenship and J.~E. Falk.
\newblock {Infinitely constrained optimization problems}.
\newblock {\em {Journal of Optimization Theory and Applications}},
  19(2):261--281, 1976.

\bibitem{Diehl.2013}
M.~Diehl, B.~Houska, O.~Stein, and P.~Steuermann.
\newblock {A lifting method for generalized semi-infinite programs based on
  lower level Wolfe duality}.
\newblock {\em {Computational Optimization and Applications}}, 54(1):189--210,
  2013.

\bibitem{Djelassi.2019}
H.~Djelassi, M.~Glass, and A.~Mitsos.
\newblock {Discretization-based algorithms for generalized semi-infinite and
  bilevel programs with coupling equality constraints}.
\newblock {\em {Journal of Global Optimization}}, 75(2):341--392, 2019.

\bibitem{Harwood.2017}
S.~M. Harwood and P.~I. Barton.
\newblock {How to solve a design centering problem}.
\newblock {\em {Mathematical Methods of Operations Research}}, 86(1):215--254,
  2017.

\bibitem{Hettich.1993}
R.~Hettich and K.~O. Kortanek.
\newblock {Semi-infinite programming: theory, methods, and applications}.
\newblock {\em {SIAM Review}}, 35(3):380--429, 1993.

\bibitem{Hettich.1982}
R.~Hettich and P.~Zencke.
\newblock {\em {Numerische Methoden der Approximation und semi-infiniten
  Optimierung}}.
\newblock {Teubner Studienb{\"u}cher Mathematik}. {Vieweg+Teubner Verlag},
  Wiesbaden and s.l., 1982.

\bibitem{Jongen.1992}
H.~T. Jongen, F.~Twilt, and G.~W. Weber.
\newblock {Semi-infinite optimization: Structure and stability of the feasible
  set}.
\newblock {\em {Journal of Optimization Theory and Applications}},
  72(3):529--552, 1992.

\bibitem{Klatte.1992}
D.~Klatte.
\newblock {Stability of Stationary Solutions in Semi-Infinite Optimization via
  the Reduction Approach}.
\newblock In W.~Oettli and D.~Pallaschke, editors, {\em {Advances in
  optimization}}, volume 382 of {\em {Lecture Notes in Economics and
  Mathematical Systems}}, pages 155--170. Springer-Verlag, Berlin and New York,
  1992.

\bibitem{Kojima.1980}
M.~Kojima.
\newblock {Strongly Stable Stationary Solutions in Nonlinear Programs}.
\newblock In S.~M. Robinson, editor, {\em {Analysis and Computation of Fixed
  Points}}, pages 93--138. {Academic Press}, 1980.

\bibitem{Lopez.2007}
M.~L{\'o}pez and G.~Still.
\newblock {Semi-infinite programming}.
\newblock {\em {European Journal of Operational Research}}, 180(2):491--518,
  2007.

\bibitem{MATLAB.2016b}
{MATLAB\textsuperscript{\textregistered}, version 9.1}.
\newblock {(R2016b), The MathWorks Inc.}
\newblock 2016.

\bibitem{Mitsos.2011}
A.~Mitsos.
\newblock {Global optimization of semi-infinite programs via restriction of the
  right-hand side}.
\newblock {\em {Optimization}}, 60(10-11):1291--1308, 2011.

\bibitem{Mitsos.2015}
A.~Mitsos and A.~Tsoukalas.
\newblock {Global optimization of generalized semi-infinite programs via
  restriction of the right hand side}.
\newblock {\em {Journal of Global Optimization}}, 61(1), 2015.

\bibitem{Polak.1997}
E.~Polak.
\newblock {\em {Optimization: Algorithms and Consistent Approximations}},
  volume 124 of {\em {Applied Mathematical Sciences}}.
\newblock Springer-Verlag, New York, 1997.

\bibitem{Reemtsen.1991}
R.~Reemtsen.
\newblock {Discretization methods for the solution of semi-infinite programming
  problems}.
\newblock {\em {Journal of Optimization Theory and Applications}},
  71(1):85--103, 1991.

\bibitem{Reemtsen.1994}
R.~Reemtsen.
\newblock {Some outer approximation methods for semi-infinite optimization
  problems}.
\newblock {\em {Journal of Computational and Applied Mathematics}},
  53(1):87--108, 1994.

\bibitem{Reemtsen.1998b}
R.~Reemtsen and J.-J. R{\"u}ckmann, editors.
\newblock {\em {Semi-Infinite Programming}}.
\newblock {Springer US}, Boston, MA, 1998.

\bibitem{Robinson.1980}
S.~M. Robinson.
\newblock {Strongly Regular Generalized Equations}.
\newblock {\em {Mathematics of Operations Research}}, 5(1):43--62, 1980.

\bibitem{Ruckmann.1999}
J.-J. R{\"u}ckmann.
\newblock {On existence and uniqueness of stationary points in semi-infinite
  optimization}.
\newblock {\em {Mathematical Programming, Series B}}, 86(2):387--415, 1999.

\bibitem{Schwientek.2013}
J.~Schwientek.
\newblock {\em {Modellierung und L{\"o}sung parametrischer Packungsprobleme
  mittels semi-infiniter Optimierung: Angewandt auf die Verwertung von
  Edelsteinen: Zugl.: Kaiserslautern, Techn. Univ., Diss., 2013}}.
\newblock Fraunhofer-Verl., Stuttgart, 2013.

\bibitem{Stein.2003}
O.~Stein.
\newblock {\em {Bi-Level Strategies in Semi-Infinite Programming}}, volume~71
  of {\em {Nonconvex Optimization and Its Applications}}.
\newblock {Springer US}, Boston, MA and s.l., 2003.

\bibitem{Stein.2003b}
O.~Stein and G.~Still.
\newblock {Solving Semi-Infinite Optimization Problems with Interior Point
  Techniques}.
\newblock {\em {SIAM Journal on Control and Optimization}}, 42(3):769--788,
  2003.

\bibitem{Stein.2010}
O.~Stein and A.~Winterfeld.
\newblock {Feasible Method for Generalized Semi-Infinite Programming}.
\newblock {\em {Journal of Optimization Theory and Applications}},
  146(2):419--443, 2010.

\bibitem{Still.2001b}
G.~Still.
\newblock {Discretization in semi-infinite programming: the rate of
  convergence}.
\newblock {\em {Mathematical Programming}}, 91(1):53--69, 2001.

\bibitem{Tsoukalas.2011}
A.~Tsoukalas and B.~Rustem.
\newblock {A feasible point adaptation of the Blankenship and Falk algorithm
  for semi-infinite programming}.
\newblock {\em {Optimization Letters}}, 5(4):705--716, 2011.

\end{thebibliography}

\appendix
\section{Proofs of auxiliary statements}
\subsection{Lemmas for convergence of stationary points} \label{app:KKTPoints}
\begin{proof}[Proof of Lemma \ref{lem:ConvKKTBoundDisc}]
	Fix throughout the proof an $\UpLevIneqIndexForAll$. The set
	\begin{equation*}
	\Indexset^{\UpLevIneqIndex,\delta}:=\Indexset\setminus \bigcup_{\LowLevVars\in \ActiveSet{\LimUpLev}} \unitball{\delta}{\LowLevVars}
	\end{equation*}
	is a compact set. Thus the maximum:
	\begin{equation*}
	\max \left\{\UpLevIneqSingle(\LimUpLev,\LowLevVars)\mid \LowLevVars\in \Indexset^{\UpLevIneqIndex,\delta} \right\}
	\end{equation*} is attained and is strictly less than $0$. Therefore, by continuity there is a $\ItIndex'$ such that, for $\ItIndex\geq\ItIndex'$ and $\LowLevVars\in \Indexset^{\UpLevIneqIndex,\delta}$:
	\begin{equation*}
	\UpLevIneqSingle(\ItUpLev,\LowLevVars)<0\Komma
	\end{equation*}
	which induces the claim.
\end{proof}
\begin{proof}[Proof of Lemma \ref{lem:ConvKKTBoundLinConstr}]
	For $\UpLevIneqIndexForAll$, consider an arbitrary converging subsequnce $\{\ItLowLev[\ItSubedIndex]\}_{\ItSubIndex\in \NNumbers}$. For $\LimLowLev$ and $\LowLevVars \in \Indexset$ the following holds:
	\begin{align*}
	\UpLevIneqSingle(\LimUpLev,\LimLowLev) =&\  \lim_{\ItSubIndex \rightarrow \infty} \UpLevIneqSingle(\ItUpLev[\ItSubedIndex],\ItLowLev[\ItSubedIndex])\\
	\geq &\  \lim_{\ItSubIndex \rightarrow \infty} \UpLevIneqSingle(\ItUpLev[\ItSubedIndex],\LowLevVars)\\
	= &\ \UpLevIneqSingle(\LimUpLev,\LowLevVars)\Punkt
	\end{align*}
	Which means $\LimLowLev$ is a global solution of $\LowLevProblem{\UpLevIneqIndex}{\LimUpLev}$. As we have assumed to have at least one active index, we must have $\LimLowLev \in \ActiveSet{\LimUpLev}$.
	
	We have seen in Equation \eqref{eq:DerivLinConstr} that, for every $\UpLevIneqIndexForAll$ and $\ItIndex\in \NNumbers$, the following holds:
	\begin{equation}\label{eq:LemKKTMatchLin1}
	D\ItLinConstr{\ItUpLev} = D_1 g(\ItUpLev,\ItLowLev) \Punkt
	\end{equation}
	
	For every $\varepsilon>0$ we can choose by continuity an $\ItSubIndex'\in \NNumbers$ such that, for every $\ItSubIndex>\ItSubIndex'$ and $\UpLevIneqIndexForAll$, we have:
	\begin{align*}
	\norm{D_1 \UpLevIneqSingle(\LimUpLev,\LimLowLev) - D_1 \UpLevIneqSingle(\ItUpLev[\ItSubedIndex-1],\ItLowLev[\ItSubedIndex-1])}<&\ \frac{\varepsilon}{2} \Komma\\
	\norm{D \ItLinConstr[\ItSubedIndex-1]{\ItUpLev[\ItSubedIndex-1]}-D \ItLinConstr[\ItSubedIndex-1]{\ItUpLev[\ItSubedIndex]}} <&\ \frac{\varepsilon}{2} \Punkt
	\end{align*}
	
	Combining these inequalities with Equation \eqref{eq:LemKKTMatchLin1} shows:
	\begin{align*}
	&\ \norm{D_1 \UpLevIneqSingle(\LimUpLev,\LimLowLev)-D \ItLinConstr[\ItSubedIndex-1]{\ItUpLev[\ItSubedIndex]}} \\
	\leq &\ \norm{D_1 \UpLevIneqSingle(\LimUpLev,\LimLowLev) -D_1 \UpLevIneqSingle(\ItUpLev[\ItSubedIndex-1],\ItLowLev[\ItSubedIndex-1])}\\
	&\ + \norm{D\ItLinConstr[\ItSubedIndex-1]{\ItUpLev[\ItSubedIndex-1]} -D\ItLinConstr[\ItSubedIndex-1]{\ItUpLev[\ItSubedIndex]}}\\
	<&\ \varepsilon \Punkt
	\end{align*}
\end{proof}

\begin{proof}[Proof of Lemma \ref{lem:KKTBounded}]
	As \EMFCQ\ holds, there is a $\EMFCQVec \in \UpLevSpace$ such that, for every $\UpLevIneqIndexForAll, \LowLevVars \in \ActiveSet{\LimUpLev}$:
	\begin{equation*}
	D_1 \UpLevIneqSingle (\LimUpLev,\LowLevVars) \EMFCQVec \leq -1\Punkt
	\end{equation*}
	By Continuity and Lemma \ref{lem:ConvKKTBoundLinConstr} we can choose a $\ItIndex'\in \NNumbers$ such that, for every $\ItIndex\geq\ItIndex', \UpLevIneqIndexForAll$ and $\LowLevVars \in \ItIndexsetActive$, the following is true:
	\begin{align*}
	D_1 \UpLevIneqSingle(\ItUpLev,\LowLevVars) \EMFCQVec\leq&\ -0.5 \Komma\\
	D \ItLinConstr[\ItIndex-1]{\ItUpLev}\EMFCQVec\leq &\ -0.5\Punkt
	\end{align*}
	Multiplying the stationarity condition given in \eqref{eq:ItLagForKKT} with vector $\EMFCQVec$ shows, for $\ItIndex\geq\ItIndex'$:
	\begin{align*}
	0=&\ \left(D \ObjFun(\ItUpLev)+\sum_{\UpLevIneqIndexForAll}\sum_{\LowLevVars\in \ItIndexsetActive} \ItUpLevIneqLagSingle_{\UpLevIneqIndex,\LowLevVars} D_1 \UpLevIneqSingle(\ItUpLev,\LowLevVars) + \sum_{\UpLevIneqIndexForAll}\ItLinConstrLagSingle D \ItLinConstr[\ItIndex-1]{\ItUpLev}\right)\cdot \EMFCQVec\\
	\leq&\ D \ObjFun(\ItUpLev)\EMFCQVec -0.5 \sum_{\UpLevIneqIndexForAll}\sum_{\LowLevVars\in \ItIndexsetActive} \ItUpLevIneqLagSingle_{\UpLevIneqIndex,\LowLevVars}  -0.5\sum_{\UpLevIneqIndexForAll}\ItLinConstrLagSingle \\
	=&\ D \ObjFun(\ItUpLev)\EMFCQVec -0.5 \sum_{\UpLevIneqIndexForAll}\sum_{\LowLevVars\in \ActiveSet{\LimUpLev}}\ItConstrLagSingle_{\UpLevIneqIndex,\LowLevVars} \Punkt 
	\end{align*}
	The boundedness now follows directly from the boundedness of $D\ObjFun(\ItUpLev)$.
\end{proof}
\subsection{Corollaries of strong stability} \label{app:Strong}
In \cite{Ruckmann.1999} for both cases (\ELICQ\ holds and \ELICQ\ does not hold, but \EMFCQ\ does) an equivalent characterization is shown (Theorem 2 and Theorem 3 in \cite{Ruckmann.1999}). The corollaries are easy consequences of these equivalent characterizations and their proofs.
\begin{proof}[Proof of Corollary \ref{cor:Strong1}]
	This is a direct consequence of the proof of Theorem 2 in \cite{Ruckmann.1999} which gives an equivalent characterization. In this proof the author first shows that the equivalent characterization induces the Lipschitz invertibility of a Function which characterizes stationary points. It is further shown that this invertibility induces a Lipschitz constant with the properties given in Corollary \ref{cor:Strong1}.  
\end{proof}
\begin{proof}[Proof of Corollary \ref{cor:Strong2}]
	For $\UpLevVars\in \UpLevSpace$ and $\UpLevIneqLag\geq 0$ let
	\begin{equation*}
	L(\UpLevVars, \UpLevIneqLag)= \ObjFun(\UpLevVars) +\sum_{\UpLevIneqIndexForAll}\sum_{\LowLevMultIndex=1}^{\LowLevMultNum} \UpLevIneqLagSingle_{\UpLevIneqIndex,\LowLevMultIndex}\cdot \UpLevIneqSingle\big(\UpLevVars,\LowLevMult(\UpLevVars)\big)\Komma
	\end{equation*}
	where $\LowLevMult(\UpLevVars), 1\leq \LowLevMultIndex \leq \LowLevMultNum$ are chosen as for the Reduction Ansatz in Section \ref{subsec:Reduction}. In \cite{Ruckmann.1999} the following equivalent characterization is shown:
	for every choice of Lagrange multipliers $\UpLevIneqLag\geq 0$ with
	\begin{equation*}
	D_1 L(\UpLevVars^*, \UpLevIneqLag) =0\Komma 
	\end{equation*}
	the following holds:
	\begin{equation*}
	\boldsymbol{d}^{\top} D_1^2 L(\UpLevVars,\UpLevIneqLag) \boldsymbol{d}>0 \text{ for all } \boldsymbol{d}\in R(\UpLevVars^*), \boldsymbol{d} \neq 0 \Komma
	\end{equation*}
	where 
	\begin{equation*}
	R(\UpLevVars^*):=\big\{\SOSCDir\in \UpLevSpace\mid D_1 \UpLevIneqSingle\big(\UpLevVars^*,\LowLevMult\big) \boldsymbol{d} = 0 \text{ for }  \UpLevIneqIndexForAll, 1 \leq \LowLevMultIndex\leq \LowLevMultNum \text{ with } \UpLevIneqLagSingle_{\UpLevIneqIndex,\LowLevMultIndex}>0\big\}\Punkt
	\end{equation*}
	This condition is stronger than the second-order sufficient condition given in \cite{Lopez.2007} (Theorem 5), which induces the claim.
\end{proof}
\subsection{Bounds for quadratic rate of convergence} \label{app:Quad}
\begin{proof}[Proof of Lemma \ref{lem:ModQuadBoundsSpecial}]
Consider throughout the proof a fixed index $\UpLevIneqIndexForAll$. By the Reduction Ansatz and the uniqueness of the active index $\LimLowLev$, there is a $\varepsilon>0$ and continuous differentiable functions
\begin{align*}
\LowLevVars^{\UpLevIneqIndex}:&\ \unitball{\varepsilon}{\LimUpLev} \rightarrow \UpLevSpace \Komma\\
\LowLevIneqLag:&\ \unitball{\varepsilon}{\LimUpLev} \rightarrow \UpLevSpace
\end{align*}
such that for every $\UpLevVars\in\unitball{\varepsilon}{\LimUpLev}$ the point $\LowLevVars^{\UpLevIneqIndex}(\UpLevVars)$ is the unique global solution of the lower level problem and $\LowLevIneqLag(\UpLevVars)$ are the unique corresponding Lagrange multipliers satisfying the KKT conditions. By the regularity assumptions these functions are twice continuously differentiable. For sufficiently large $\ItIndex$, we thus have $\ItLowLev = \LowLevVars^{\UpLevIneqIndex}(\ItUpLev)$. By a Taylor expansion of $\LowLevVars^{\UpLevIneqIndex}(\ItUpLev[\ItIndex])$ around $\ItUpLev[\ItIndex-1]$, there is an $\hat{\UpLevVars}= t \cdot \ItUpLev[\ItIndex]+(1-t)\cdot \ItUpLev[\ItIndex-1]$, for an appropriate $t\in [0,1]$,  such that
\begin{align*}
\norm{\ItLowLev[\ItIndex]-\ItLinLowLev[\ItIndex-1](\ItUpLev[\ItIndex])} =&\  \norm{\LowLevVars^{\UpLevIneqIndex}(\ItUpLev[\ItIndex])-\LowLevVars^{\UpLevIneqIndex}(\ItUpLev[\ItIndex-1])-D \LowLevVars^{\UpLevIneqIndex}(\ItUpLev[\ItIndex-1])(\ItUpLev[\ItIndex]-\ItUpLev[\ItIndex-1])}\\
=&\ \frac{1}{2}\norm{ (\ItUpLev[\ItIndex]-\ItUpLev[\ItIndex-1])^\top \cdot  D^2 \LowLevVars^{\UpLevIneqIndex}(\hat{\UpLevVars})\cdot(\ItUpLev[\ItIndex]-\ItUpLev[\ItIndex-1])} \Punkt
\end{align*}
Completely analogous the same holds for the Lagrange multipliers $\LowLevIneqLag$. By the continuity of the second derivative there are $\ConstantL_1\geq 0$ such that
\begin{align}
\norm{\ItLowLev[\ItIndex]-\ItLinLowLev[\ItIndex-1](\ItUpLev[\ItIndex])} \leq &\  \ConstantL_1 \norm{\ItUpLev[\ItIndex]-\ItUpLev[\ItIndex-1]}^2 \Komma \label{eq:QuadBound1}\\
\norm{\ItLowLevIneqLag[\ItIndex]-\ItLinLowLevIneqLag[\ItIndex-1](\ItUpLev[\ItIndex])} \leq &\  \ConstantL_1 \norm{\ItUpLev[\ItIndex]-\ItUpLev[\ItIndex-1]}^2 \Punkt\label{eq:QuadBound2}
\end{align}
We can now show both assertions:
\begin{itemize}
	\item[i)] Note that, for every $\ItIndex\in \NNumbers$, we have
	\begin{equation*}
	\UpLevIneqSingle(\ItUpLev,\ItLowLev) = \LowLevLag_{\UpLevIneqIndex}(\ItUpLev,\ItLowLev,\ItLowLevIneqLag) \Komma
	\end{equation*}
	by the complementarity conditions. By definition the following is also true:
	\begin{equation*}
	\ItLinConstr[\ItIndex-1]{\ItUpLev[\ItIndex]}=\LowLevLag_{\UpLevIneqIndex}\big(\ItUpLev,\ItLinLowLev[\ItIndex-1](\ItUpLev),\ItLinLowLevIneqLag[\ItIndex-1](\ItUpLev)\big) \Punkt
	\end{equation*}
	
	By the KKT conditions for the lower-level problem and strict complementary slackness, we have, for sufficiently large $\ItIndex$:
	\begin{align*}
	D_2 \LowLevLag_{\UpLevIneqIndex}(\ItUpLev,\ItLowLev,\ItLowLevIneqLag)\cdot(\ItLinLowLev[\ItIndex-1](\ItUpLev)-\ItLowLev)  =&\ 0 \Komma\\
	D_{3} \LowLevLag_{\UpLevIneqIndex}(\ItUpLev,\ItLowLev,\ItLowLevIneqLag)\cdot(\ItLinLowLevIneqLag[\ItIndex-1](\ItUpLev)-\ItLowLevIneqLag) =&\ 0 \Punkt
	\end{align*}
	
	This means that, for sufficiently large $\ItIndex$, by a Taylor expansion in $(\ItLowLev,\ItLowLevIneqLag)$, there are
	\begin{align*}
	\hat{\LowLevVars} =&\  t \cdot \ItLinLowLev[\ItIndex-1](\ItUpLev) +(1-t) \ItLowLev\Komma\\
	\hat{\boldsymbol{\mu}}=&\ t \cdot \ItLinLowLevIneqLag[\ItIndex-1](\ItUpLev)+(1-t)\ItLowLevIneqLag \Komma
	\end{align*}
	for an appropriate $t\in [0,1]$, such that:
	\begin{align*}
	&\norm{\UpLevIneqSingle(\ItUpLev,\ItLowLev)-\ItLinConstr[\ItIndex-1]\ItUpLev}\\
	=&\ \norm{\LowLevLag_{\UpLevIneqIndex}(\ItUpLev,\ItLowLev,\ItLowLevIneqLag)-\LowLevLag_{\UpLevIneqIndex}(\ItUpLev,\ItLinLowLev[\ItIndex-1](\ItUpLev),\ItLinLowLevIneqLag[\ItIndex-1](\ItUpLev))}\\
	=&\ \frac{1}{2}\norm{ \begin{pmatrix}\ItLinLowLev[\ItIndex-1](\ItUpLev)-\ItLowLev\\
		\ItLinLowLevIneqLag[\ItIndex-1](\ItUpLev)-\ItLowLevIneqLag
		\end{pmatrix}^\top\cdot  D^2_{2,3}  \LowLevLag_{\UpLevIneqIndex}(\ItUpLev, \hat{\LowLevVars},\hat{\boldsymbol{\mu}})\cdot 
		\begin{pmatrix}\ItLinLowLev[\ItIndex-1](\ItUpLev)-\ItLowLev\\
		\ItLinLowLevIneqLag[\ItIndex-1](\ItUpLev)-\ItLowLevIneqLag
		\end{pmatrix}}
	\end{align*}
	By continuity there is a $\Constant \in \mathbb{R}$ which bounds the second derivative and we have together with Equations \eqref{eq:QuadBound1} and \eqref{eq:QuadBound2}, for sufficiently large $\ItIndex$:
	\begin{align*}
	&\ \norm{\UpLevIneqSingle(\ItUpLev,\ItLowLev)-\ItLinConstr[\ItIndex-1]{\ItUpLev}}\\
	\leq &\ \frac{1}{2} \Constant\cdot \left(\norm{\ItLinLowLev[\ItIndex-1](\ItUpLev)-\ItLowLev}^2 +\norm{\ItLinLowLevIneqLag[\ItIndex-1](\ItUpLev)-\ItLowLevIneqLag}^2
	\right)\\
	\leq &\ \Constant\cdot \ConstantL_1\cdot \norm{\ItUpLev[\ItIndex+1]-\ItUpLev}^4 \Punkt
	\end{align*}
	\item[ii)]We first bound each term of the derivative and then combine these terms.
	
	Analogous to part i) we have, for sufficiently large $\ItIndex$:
	\begin{align*}
	D_2 \LowLevLag_{\UpLevIneqIndex}(\ItUpLev,\ItLowLev,\ItLowLevIneqLag)D\LowLevVars^{\UpLevIneqIndex}(\ItUpLev[\ItIndex-1]) = &\ 0 \Komma\\
	D_3 \LowLevLag_{\UpLevIneqIndex}(\ItUpLev,\ItLowLev,\ItLowLevIneqLag)D\LowLevIneqLag(\ItUpLev[\ItIndex-1]) = &\ 0 \Punkt
	\end{align*}
	As all involved functions are at least twice continuously differentiable, there is an $\ConstantL_2$ with
	\begin{align*}
	&\ \norm{D_{2,3}\LowLevLag_{\UpLevIneqIndex}(\ItUpLev,\ItLinLowLev[\ItIndex-1](\ItUpLev),\ItLinLowLevIneqLag[\ItIndex-1](\ItUpLev))\cdot\begin{pmatrix}
		D \LowLevVars^{\UpLevIneqIndex}(\ItUpLev[\ItIndex-1])\\
		D \LowLevIneqLag(\ItUpLev[\ItIndex-1])
		\end{pmatrix}}\\ \leq\ & L_2 (\norm{\ItLinLowLev[\ItIndex-1](\ItUpLev)-\ItLowLev}+\norm{\ItLinLowLevIneqLag[\ItIndex-1](\ItUpLev)-\ItLowLevIneqLag})\\ \leq\ & \ConstantL_2\cdot 2 \cdot \ConstantL_1 \cdot \norm{\ItUpLev-\ItUpLev[\ItIndex-1]}^2 \Komma
	\end{align*}
	where we used Equations \eqref{eq:QuadBound1} and \eqref{eq:QuadBound2} in the last inequality.
	Analogously there is also an $\ConstantL_3>0$ such that, for sufficiently large $\ItIndex$:
	\begin{align*}
	& \norm{D_{1}  \UpLevIneqSingle(\ItUpLev,\ItLowLev)-D_{1} \LowLevLag_{\UpLevIneqIndex}(\ItUpLev,\ItLinLowLev[\ItIndex-1](\ItUpLev),\ItLinLowLevIneqLag[\ItIndex-1](\ItUpLev))} \\
	=& \norm{D_{1} \UpLevIneqSingle(\ItUpLev,\ItLowLev)-D_{1}\UpLevIneqSingle(\ItUpLev,\ItLinLowLev[\ItIndex-1](\ItUpLev))}
	\\ \leq & \ConstantL_3 \cdot \ConstantL_1 \norm{\ItUpLev-\ItUpLev[\ItIndex-1]}^2
	\end{align*}
	We can now bound the derivative with the help of the chain-rule and the above inequalities, for sufficiently large $\ItIndex$:
	\begin{align*}
	&\norm{D_1 \UpLevIneqSingle(\ItUpLev,\ItLowLev)-D \ItLinConstr[\ItIndex-1]{\ItUpLev}} \\
	= &  \| D_{1}\UpLevIneqSingle(\ItUpLev,\ItLowLev) - D_{1}\LowLevLag_{\UpLevIneqIndex}(\ItUpLev,\ItLinLowLev[\ItIndex-1](\ItUpLev),\ItLinLowLevIneqLag[\ItIndex-1](\ItUpLev))\\
	&-D_{2,3}\LowLevLag_{\UpLevIneqIndex}(\ItUpLev,\ItLinLowLev[\ItIndex-1](\ItUpLev),\ItLinLowLevIneqLag[\ItIndex-1](\ItUpLev)) \cdot
	\begin{pmatrix}
	D \LowLevVars^{\UpLevIneqIndex}(\ItUpLev[\ItIndex-1])\\
	D \LowLevIneqLag(\ItUpLev[\ItIndex-1])
	\end{pmatrix} \|\\
	\leq &  \ConstantL_2\cdot 2 \cdot \ConstantL_1 \norm{\ItUpLev-\ItUpLev[\ItIndex-1]}^2 + \ConstantL_3 \cdot \ConstantL_1 \norm{\ItUpLev-\ItUpLev[\ItIndex-1]}^2 \Punkt
	\end{align*}
	Letting $\ConstantKTwo=(2\ConstantL_2+\ConstantL_3)\ConstantL_1$ the claim follows.
\end{itemize}
	
\end{proof}
\begin{proof}[Proof of Lemma \ref{lem:QuadConvAcitveDiscBound}]
	Again fix throughout the proof an $\UpLevIneqIndexForAll$.
	As we assumed the Reduction Ansatz to hold in $\LimUpLev$, the global solution $\LimLowLev$ is strongly stable as defined in \cite{Kojima.1980}. By exactly this property there is an $\varepsilon>0$, an $\ConstantL'>0$ and a $\ItIndex'\in\NNumbers$ such that, for every $0\leq\ConstantL\leq\ConstantL'$ and $\ItIndex\geq \ItIndex'$, there exists a unique stationary point of
	\begin{equation}
	\max_{\LowLevVars \in \Indexset} \UpLevIneqSingle(\ItUpLev,\LowLevVars)+\ConstantL \norm{\ItLowLev-\LowLevVars}^2 \label{eq:proofQuadConvLowGrowth}
	\end{equation}
	in $\unitball{\varepsilon}{\LimLowLev}$. For every $\ItIndex\in \NNumbers$, the KKT conditions are not affected at the point $\ItLowLev$ by the added term $\ConstantL \norm{\ItLowLev-\LowLevVars}^2$. This means that this point is still a stationary point. We need to show that it is still a local maximum.  By compactness the maximum
	\begin{equation*}
	\max_{\substack{\LowLevVars \in \Indexset\\ \norm{\LowLevVars -\LimLowLev}=\varepsilon}} \UpLevIneqSingle(\LimUpLev,\LowLevVars) -\UpLevIneqSingle(\LimUpLev,\LimLowLev)
	\end{equation*}
	is attained and is strictly less than $0$. We can therefore choose a $\ItIndex''$ and a $\ConstantL''>0$ such that, for every $\ItIndex\geq \ItIndex''$, $0\leq \ConstantL\leq \ConstantL''$ and $\LowLevVars \in \Indexset$ with $\norm{\LowLevVars-\LimLowLev}=\varepsilon$, the following holds:
	\begin{equation*}
	\UpLevIneqSingle(\ItUpLev,\LowLevVars)+ \ConstantL \norm{\ItLowLev-\LowLevVars}^2<\UpLevIneqSingle(\ItUpLev,\ItLowLev) \Punkt
	\end{equation*}
	As, for sufficiently large $\ItIndex$, the iterate of the lower-level $\ItLowLev$ lies within $\unitball{\varepsilon}{\LimLowLev}$, there must be a local minimum  within $\unitball{\varepsilon}{\LimLowLev} \cap \Indexset$ for the problem given in Equation \eqref{eq:proofQuadConvLowGrowth}. As \LICQ\ holds and by uniqueness of the stationary point, this point must be, for sufficiently large $\ItIndex$, the stationary point $\ItLowLev$. 
	
	For every $\LowLevVars \in \Indexset \cap \unitball{\varepsilon}{\LimLowLev}$ we then have for $\ConstantK:= \min\{\ConstantL',\ConstantL''\}$:
	\begin{equation}
	\UpLevIneqSingle(\ItUpLev,\LowLevVars) + \ConstantK \norm{\ItLowLev-\LowLevVars}^2 \leq \UpLevIneqSingle(\ItUpLev,\ItLowLev) \label{eq:QuadBoundLowerLevel}
	\end{equation}
	If now $\UpLevIneqSingle(\ItUpLev,\ItLowLev) <0$, there cannot be any active $\LowLevVars\in \Indexset$, because $\ItLowLev$ is the global maximum. For the remainder of the proof we therefore assume $\UpLevIneqSingle(\ItUpLev,\ItLowLev) \geq 0$. As $\ItUpLev$ is feasible for $\LinSIP[\ItIndex-1]{\ItIndexset}$, we know by Lemma \ref{lem:ModQuadBoundsSpecial}, for sufficiently large $\ItIndex$:
	\begin{equation*}
	\UpLevIneqSingle(\ItUpLev,\ItLowLev) \leq \ConstantOne \norm{\ItUpLev-\ItUpLev[\ItIndex-1]}^4 \Punkt
	\end{equation*}
	Combining this inequality with Equation \eqref{eq:QuadBoundLowerLevel} shows, for $\LowLevVars \in \Indexset$ with $\UpLevIneqSingle(\ItUpLev,\LowLevVars)=0$ and sufficiently large $\ItIndex$:
	\begin{equation*}
	\ConstantOne\norm{\ItUpLev-\ItUpLev[\ItIndex-1]}^4 \geq \UpLevIneqSingle(\ItUpLev,\ItLowLev)-\UpLevIneqSingle(\ItUpLev,\LowLevVars) \geq \ConstantK\norm{ \ItLowLev-\LowLevVars}^2 \Komma
	\end{equation*}	
	which proves the claim for all $\LowLevVars \in \Indexset$ with $\|\LowLevVars-\LimLowLev\|<\varepsilon$.
	
	Note that the set $\Indexset^{\UpLevIneqIndex}_{\varepsilon} := \{ \LowLevVars \in \Indexset \mid \|\LowLevVars-\LimLowLev\|\geq \varepsilon\}$ is compact. Thus the maximum $\max_{\LowLevVars \in \Indexset^{\UpLevIneqIndex}_{\varepsilon}} g(\LimUpLev,\LowLevVars)$ is attained and is strictly less then 0. By continuity we have, for sufficiently large $\ItIndex$ and for all $\LowLevVars\in \Indexset^{\UpLevIneqIndex}_{\varepsilon}$:
	\begin{equation*}
	\UpLevIneqSingle(\ItUpLev,\LowLevVars) <0 \Punkt
	\end{equation*} 
\end{proof}

\end{document}